\documentclass[12pt]{extarticle}
\usepackage{amsmath, amsthm, amssymb, color}
\usepackage[colorlinks=true,linkcolor=blue,urlcolor=blue]{hyperref}
\usepackage{graphicx}
\usepackage{caption}
\usepackage{mathtools}
\usepackage{enumerate}
\usepackage{verbatim}
\usepackage{tikz,tikz-cd,tikz-3dplot}
\usepackage{amssymb}
\usetikzlibrary{matrix}
\usetikzlibrary{arrows}
\usepackage{algorithm}
\usepackage[noend]{algpseudocode}
\usepackage{caption}
\usepackage[normalem]{ulem}
\usepackage{subcaption}
\tolerance 10000
\headheight 0in
\headsep 0in
\evensidemargin 0in
\oddsidemargin \evensidemargin
\textwidth 6.5in
\topmargin .25in
\textheight 8.8in
\synctex=1
\usepackage{makecell}
\usepackage{array}
\usepackage[giveninits=true, maxbibnames=99, style=numeric]{biblatex}
\DeclareFieldFormat{pages}{#1}
\renewbibmacro{in:}{}

\DeclareFieldFormat[article]{title}{\textit{#1}}
\DeclareFieldFormat[article]{journaltitle}{\normalfont{#1}}
\DeclareFieldFormat[article]{volume}{\textbf{#1}}
\DeclareListFormat[misc]{date}{\mkbibparens{#1}}
\addbibresource{references.bib}

\newtheorem{theorem}{Theorem}
\newtheorem*{theorem*}{Theorem}
\newtheorem{proposition}[theorem]{Proposition}
\newtheorem{lemma}[theorem]{Lemma}
\newtheorem{corollary}[theorem]{Corollary}

\theoremstyle{definition}

\newtheorem{conjecture}[theorem]{Conjecture}

\newtheorem{example}[theorem]{Example}

\numberwithin{theorem}{section}

\newcommand{\PP}{\mathbb{P}}
\newcommand{\RR}{\mathbb{R}}

\newcommand{\CC}{\mathbb{C}}

\newcommand{\NN}{\mathbb{N}}

\title{\bf Likelihood Geometry \\ of the Squared Grassmannian}

\author{Hannah Friedman}

\date{}
\begin{document}
\maketitle

\begin{abstract}
  \noindent
  We study projection determinantal point processes and their connection to the squared Grassmannian.
  We prove that the log-likelihood function of this statistical model has $(n - 1)!/2$ critical points, all of which are real and positive, thereby settling a conjecture of Devriendt, Friedman, Reinke, and Sturmfels.
\end{abstract}

\let\thefootnote\relax\footnote{\noindent\emph{MSC2020:} 62R01, 14M15, 14P25.}

\section{Introduction}
Determinantal point processes (DPPs) on a finite ground set are a family of statistical models whose state space is the power set of the ground set.
The key feature of DPPs is negative correlation, meaning that DPPs select for diverse subsets of the ground set, for some notion of diversity.  
DPPs elegantly model negative correlation and consequently have many applications in physics, random matrix theory, and machine learning \cite{BMRU, KL}.

DPPs were previously studied from an algebraic statistics perspective in \cite{FSZ}.
We continue in this vein, motivated by the connection between the Grassmannian and a special class of DPPs called projection DPPs which is studied by Devriendt, Friedman, Reinke, and Sturmfels in \cite{DFRS}.
We go beyond the work in \cite{FSZ}, giving a complete picture of the likelihood geometry of projection DPPs.
Our main result is the resolution of \cite[Conjecture 4.2]{DFRS}.

A DPP is a random variable $Z$ on the power set $2^{[n]}$ of the finite set $[n] = \{1, \ldots, n\}$ such that
$\PP[I \subseteq Z] = \det(K_I)$, 
where $K$ is an $n \times n$ symmetric matrix with eigenvalues in $[0,1]$ and $K_I$ is the principal submatrix of $K$ whose rows and columns are indexed by $I$.
The matrix $K$ is the \emph{kernel} of the DPP and encodes the similarity of the elements of the ground set $[n]$.

We study maximum likelihood estimation for determinantal point processes.
Given some data $(u_I)_{I \in 2^{[n]}}$, we seek the maximizer of the log-likelihood function
\begin{equation}\label{eqn:implicitlikelihood}
  L_u(q) = \sum_{I \in 2^{[n]}} u_I \log(q_I) -  \big ( \sum_{I \in 2^{[n]}} u_I \big ) \log\big ( \sum_{I \in 2^{[n]}} q_I \big)
\end{equation}
where the vector $q = (q_I)_{I \in 2^{[n]}}$ is a probability distribution coming from a DPP, up to scaling.
The maximizer of $L_u$, called the \emph{maximum likelihood estimate} (MLE), is the point on the DPP model that best explains the data $u$. 
The number of complex critical points of (\ref{eqn:implicitlikelihood}) for generic data $u$ is the \emph{maximum likelihood degree} (ML degree) of the model.
The ML degree gives an algebraic bound on the complexity of maximum likelihood estimation.

To count the critical points of (\ref{eqn:implicitlikelihood}), we need a precise description of the constraints on $(q_I)_{I \in 2^{[n]}}$.
In particular, we need to know the probability $q_I$ of observing a set $I$, rather than $\PP[I \subseteq Z]$.
We discuss two scenarios in which there are nice formulae for $q_I$.

When the eigenvalues of $K$ lie in $(0, 1)$, there exists a positive definite matrix $\Theta$ such that $q_I = \frac{\det(\Theta_I)}{\det(\Theta + \mathrm{Id}_n)}$; see \cite{BMRU} for details.
The maximum likelihood estimation problem can then be solved by maximizing (\ref{eqn:implicitlikelihood}) subject to the constraint that $q$ satisfies the hyperdeterminantal relations; see \cite{oeding}. 
Practitioners use the unconstrained, parametric log-likelihood function
\begin{align*}
  L_u(\Theta) = \sum_{I \in 2^{[n]}} u_I \log(\det(\Theta_I)) -  \big ( \sum_{I \in 2^{[n]}} u_I \big ) \log(\det(\Theta + \mathrm{Id}_n)).
\end{align*}
In \cite{FSZ}, the authors express the number of critical points of the parametric log-likelihood function in terms of the ML degree of the hyperdeterminantal variety.

In this paper, we take the approach of \cite[Section 4]{DFRS} and restrict the eigenvalues of $K$ to be in $\{0, 1\}$.
If $K$ has rank $d$, then $K$ is the unique orthogonal projection matrix onto a $d$-dimensional subspace and we denote $K$ by $P$.
DPPs whose kernels are projection matrices are called \emph{projection DPPs} \cite{KL}.
By \cite[Corollary 4.3]{DFRS}, the state space of a projection DPP with kernel $P$ is the set of $d$-subsets of $[n]$, denoted $\binom{[n]}{d}$, and the probability of observing a $d$-subset $I$ is $q_I = \det(P_I)$.
The column space of $P$ is a $d$-dimensional subspace of $\RR^n$ and can be represented by its vector of Pl\"{u}cker coordinates $(p_I)_{I \in \binom{[n]}{d}}$; see \cite[Chapter 5]{MS}.
There is a particularly nice relationship between the principal minors of $P$ and the Pl\"{u}cker coordinates of its column span. 
\begin{lemma}[\cite{DFRS}, Lemma 3.1]\label{lem:twolives}
  The principal minors of an orthogonal projection matrix $P$ are proportional to the corresponding squared Pl\"{u}cker coordinates:
  \begin{align*}
    q_I = \det(P_I) = \frac{p^2_I}{\sum_{J \in \binom{[n]}{d}} p_J^2}.
  \end{align*}
\end{lemma}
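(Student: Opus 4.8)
The plan is to combine the structure theorem for orthogonal projection matrices with two applications of the Cauchy--Binet formula. Since $P$ is the orthogonal projection onto a $d$-dimensional subspace $V \subseteq \RR^n$, it is symmetric, idempotent, and of rank $d$, so I would first write $P = AA^{T}$ where $A \in \RR^{n \times d}$ is any matrix whose columns form an orthonormal basis of $V$. For a $d$-element subset $I$, the principal submatrix then factors as $P_I = A_I A_I^{T}$, where $A_I$ is the $d \times d$ submatrix of $A$ with rows indexed by $I$. Hence $\det(P_I) = \det(A_I)\det(A_I^{T}) = \det(A_I)^{2}$, which in particular is nonnegative.

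Next I would identify $\det(A_I)$ with a Pl\"{u}cker coordinate: by definition the maximal minors $(\det(A_I))_{I \in \binom{[n]}{d}}$ are the Pl\"{u}cker coordinates of $V = \mathrm{colspan}(P)$ with respect to the basis given by the columns of $A$, so $q_I = \det(P_I) = p_I^{2}$ for this particular representative $p$ of the Pl\"{u}cker vector. To pin down the normalizing constant, apply Cauchy--Binet a second time: $\sum_{J \in \binom{[n]}{d}} \det(A_J)^{2} = \det(A^{T}A) = \det(\mathrm{Id}_d) = 1$, using that the columns of $A$ are orthonormal. Thus $\sum_{J} p_J^{2} = 1$ and $\det(P_I) = p_I^{2} = p_I^{2}\big/\sum_{J} p_J^{2}$ for the orthonormal representative.

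Finally, Pl\"{u}cker coordinates are defined only up to a common scalar: replacing $A$ by $AG$ for $G \in GL_d(\RR)$ multiplies every $p_I$ by $\det(G)$, so the ratio $p_I^{2}\big/\sum_{J} p_J^{2}$ is unchanged and therefore equals $\det(P_I)$ for every choice of Pl\"{u}cker representative of $\mathrm{colspan}(P)$. This is fundamentally a bookkeeping argument rather than a hard theorem; the only points that need genuine care are the existence of the orthonormal factorization $P = AA^{T}$ (so that $A^{T}A$ really is the identity) and the scale-invariance step that lets one pass from the orthonormal representative to an arbitrary one. The computational core is just the two invocations of Cauchy--Binet, which I expect to go through routinely.
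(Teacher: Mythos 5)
Your argument is correct and is the standard proof of this fact (the paper itself states the lemma as a citation to [DFRS, Lemma 3.1] without reproving it, and the proof there proceeds exactly as you do: factor $P = AA^{T}$ with $A^{T}A = \mathrm{Id}_d$, get $\det(P_I) = \det(A_I)^2 = p_I^2$, and use Cauchy--Binet to see $\sum_J p_J^2 = \det(A^TA) = 1$). One cosmetic remark: the first ``application of Cauchy--Binet,'' namely $\det(A_I A_I^T) = \det(A_I)^2$, is just multiplicativity of the determinant since $A_I$ is square; only the normalization step genuinely uses Cauchy--Binet.
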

An immediate consequence of Lemma~\ref{lem:twolives} is that every probability distribution arising from a projection DPP can be written as the entry-wise square of a vector of Pl\"{u}cker coordinates.
This observation motivates the definition of the \emph{squared Grassmannian}, denoted $\mathrm{sGr}(d,n)$, as the Zariski closure of the  model for projection DPPs.
The squared Grassmannian is the image of the squaring map from the Grassmannian ${\rm Gr}(d, n) \subset \mathbb{P}^{\binom{n}{d} - 1}$ in its Pl\"{u}cker embedding given by $(p_I)_{I \in \binom{[n]}{d}} \mapsto (p_I^2)_{I \in \binom{[n]}{d}}$; see \cite{AV, DFRS}.
Thus the MLE of a projection DPP is found by maximizing the implicit log-likelihood function (\ref{eqn:implicitlikelihood}) over  ${\rm sGr}(d, n)$.

We mainly focus on the case where $d = 2$, so our model has state space $\binom{[n]}{2}$.
We determine the number of critical points of the implicit log-likelihood function:
\begin{theorem}[\cite{DFRS}, Conjecture 4.2]\label{thm:main}
  The ML degree of ${\rm sGr}(2,n)$ is $\frac{(n - 1)!}{2}$ for $n \geq 3$.
\end{theorem}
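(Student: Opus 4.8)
The plan is to compute $\mathrm{mldeg}(\mathrm{sGr}(2,n))$ as a signed topological Euler characteristic, in the spirit of Huh's theorem on very affine varieties, after transporting the likelihood geometry from $\mathrm{sGr}(2,n)$ up to the Grassmannian along the squaring map $\mathrm{sq}$. Write $N=\binom n2-1$, let $\chi$ denote the (additive) Euler characteristic of a complex variety, and set $X^\circ=\mathrm{sGr}(2,n)\setminus\bigl(\bigcup_I\{x_I=0\}\cup\{\sum_I x_I=0\}\bigr)$. First I would establish that $X^\circ$ is smooth. The group $(\ZZ/2)^{n-1}$ acts on $\mathrm{Gr}(2,n)$ by $p_I\mapsto\varepsilon_i\varepsilon_j p_I$; this action is free on the locus $W$ where all Pl\"ucker coordinates are nonzero (a fixed point forces $\varepsilon_i\varepsilon_j=1$ for all $i\ne j$, hence $\varepsilon$ trivial), and — comparing the three-term Pl\"ucker relation on $p$ and on $\varepsilon\cdot p$ — for $p\in W$ the twist $\varepsilon\cdot p$ lies on $\mathrm{Gr}(2,n)$ only when $\varepsilon$ is a coboundary $\varepsilon_{ij}=\eta_i\eta_j$. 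Thus $\mathrm{sq}$ restricts on $W$ to the quotient of a free action, so its image $\mathrm{sGr}(2,n)\cap(\CC^*)^N$ is smooth and hence so is its open subset $X^\circ$; since $\dim\mathrm{sGr}(2,n)=\dim\mathrm{Gr}(2,n)=2n-4$ is even, Huh's formula gives $\mathrm{mldeg}(\mathrm{sGr}(2,n))=\chi(X^\circ)$.

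Next I would pull this back. Put $G^\circ=\mathrm{Gr}(2,n)\setminus\bigcup_I\{p_I=0\}$, $Q=\{\sum_I p_I^2=0\}$, and $V=G^\circ\setminus Q$. By the previous paragraph $\mathrm{sq}\colon V\to X^\circ$ is \'etale of degree $2^{n-1}$, so $\chi(X^\circ)=2^{1-n}\chi(V)$. The torus of coordinate rescalings of $\CC^n$ acts freely on $G^\circ$, hence $\chi(G^\circ)=0$, and additivity along $G^\circ=V\sqcup(G^\circ\cap Q)$ reduces everything to computing $\chi(G^\circ\cap Q)$, since $\chi(V)=-\chi(G^\circ\cap Q)$.

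For this last Euler characteristic I would use the map $G^\circ\cap Q\to\mathcal M_{0,n}$ sending a $2$-plane to the $\mathrm{PGL}_2$-class of its $n$ points $t_1,\dots,t_n$ on $\PP^1$. Writing the plane as the row span of the $2\times n$ matrix whose $i$th column is $(\lambda_i,\lambda_i t_i)$, so $p_{ij}=\lambda_i\lambda_j(t_i-t_j)$, the fibre over $[t]$ is $\{[\lambda]\in(\CC^*)^{n-1}:\sum_{i<j}\lambda_i^2\lambda_j^2(t_i-t_j)^2=0\}$, which under $\nu_i=\lambda_i^2$ is an \'etale $2^{n-1}$-fold cover of the torus part of the quadric $\{\sum_{i<j}(t_i-t_j)^2\nu_i\nu_j=0\}\subset\PP^{n-1}$. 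The identity
\[
\sum_{i<j}(t_i-t_j)^2\nu_i\nu_j=\Bigl(\sum_i\nu_i\Bigr)\Bigl(\sum_i t_i^2\nu_i\Bigr)-\Bigl(\sum_i t_i\nu_i\Bigr)^2
\]
exhibits this quadric as the cone, with vertex a linear $\PP^{n-4}$, over the \emph{fixed} smooth conic $\{XZ=Y^2\}$, and the same identity holds on every coordinate subspace. Consequently the Euler characteristics of the quadric and of all its coordinate sections depend only on the number $m$ of coordinates involved: writing $\chi_m$ for the torus part of the $m$-coordinate quadric, the cone structure gives $\chi=m-1$ for the full $m$-coordinate quadric when $m\ge3$, and inclusion–exclusion over coordinate strata, $\sum_{k=1}^{m}\binom mk\chi_k=m-1$ together with $\chi_1=1$ and $\chi_2=0$, yields $\chi_m=(-1)^m\binom{m-1}{2}$ for $m\ge2$. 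Hence every fibre has Euler characteristic $2^{n-1}(-1)^n\binom{n-1}{2}$, and using $\chi(\mathcal M_{0,n})=(-1)^{n-3}(n-3)!$ one obtains $\chi(G^\circ\cap Q)=(-1)^{n-3}(n-3)!\cdot2^{n-1}(-1)^n\binom{n-1}{2}=-2^{n-2}(n-1)!$. Therefore $\chi(V)=2^{n-2}(n-1)!$ and $\mathrm{mldeg}(\mathrm{sGr}(2,n))=2^{1-n}\chi(V)=(n-1)!/2$; as a check this gives $1$ for $n=3$ and $3$ for $n=4$.

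The step I expect to be the main obstacle is the multiplicativity of $\chi$ in the last computation: the family $G^\circ\cap Q\to\mathcal M_{0,n}$ is not proper, so one must argue separately that $\chi$ of the total space equals $\chi(\mathcal M_{0,n})$ times the constant fibre Euler characteristic — for instance by producing a finite stratification of $\mathcal M_{0,n}$ over which the family is Zariski-locally trivial. The uniform description of the fibres as the all-coordinates-nonzero locus of a cone over the \emph{same} conic $\{XZ=Y^2\}$, with the coordinate hyperplanes meeting each such cone in the same combinatorial pattern, is what should make this work, but it needs care; the same care is needed to confirm that the \'etale covering degrees are exactly $2^{n-1}$ throughout. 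The remaining inputs — smoothness of $X^\circ$, Huh's Euler-characteristic formula for the ML degree, and the standard value $\chi(\mathcal M_{0,n})=(-1)^{n-3}(n-3)!$ — are as discussed above or classical.
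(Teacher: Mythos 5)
Your argument is correct and arrives at the right count, but by a genuinely different route from the paper's. Both proofs start from Huh's Euler-characteristic formula and divide out the degree-$2^{n-1}$ squaring cover, but the paper then passes to the affine chart $X_n\subset\CC^{2(n-2)}$ of $2\times n$ matrices and runs an induction on $n$ via the column-deletion map $X_{n+1}^\circ\to X_n$: this is a stratified fibration whose special strata $S_i$, $S_i\cap S_j$ are shown to contribute nothing (either $\chi(S_i)=0$ or the M\"obius-weighted sum of fiber Euler characteristics cancels), leaving $\chi(X_{n+1})=2n\,\chi(X_n)$. You instead fiber the relevant locus of the Grassmannian over $\mathcal M_{0,n}$ in a single step, reduce to $\chi(G^\circ\cap Q)$ using $\chi(G^\circ)=0$, and compute each fiber by recognizing $\sum_{i<j}(t_i-t_j)^2\nu_i\nu_j$ as a rank-three quadric (a cone over $XZ=Y^2$) and running inclusion--exclusion over coordinate strata; the identity, the recursion $\sum_k\binom mk\chi_k=m-1$, and the resulting $\chi_m=(-1)^m\binom{m-1}{2}$ all check out, as do the values for $n=3,4$. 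The obstacle you flag at the end is not a real one: for a morphism of complex algebraic varieties the function $y\mapsto\chi(f^{-1}(y))$ is constructible and $\chi$ integrates over the base, so the constancy of the fiber Euler characteristic (which your rank computation gives uniformly in $t$) already yields $\chi(G^\circ\cap Q)=\chi(\mathcal M_{0,n})\cdot\chi(F)$ without local triviality --- this is the same multiplicativity the paper itself invokes for its stratified fibrations. What your route buys is the complete absence of special-stratum bookkeeping and a conceptually transparent reason for the factor $(n-1)!/2=\binom{n-1}{2}(n-3)!$; what it costs is the external input $\chi(\mathcal M_{0,n})=(-1)^{n-3}(n-3)!$ and a careful justification that squaring is \'etale of degree exactly $2^{n-1}$ on the open Grassmannian (your coboundary argument via the three-term Pl\"ucker relations is the right one; the paper outsources this point to \cite[Remark 4.4]{DFRS}). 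The paper's chart and stratification are also reused for the real-critical-point count in Section~\ref{sec:realsolutions}, which your approach would have to redo separately.
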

In Section~\ref{sec:realsolutions}, we prove that all of these critical points are relevant from a statistical perspective.
This is reminiscent of Varchenko's Theorem \cite[Theorem 13]{CHKS}. 
\begin{theorem}\label{thm:nonnegative}
  For $u \in \NN^{\binom{n}{2}}$, each critical point of the log-likelihood function~(\ref{eqn:implicitlikelihood}) over ${\rm sGr}(2,n)$ is real, positive, and a local maximum when restricted to real points of ${\rm sGr}(2,n)$.
\end{theorem}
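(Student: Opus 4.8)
The plan is to argue in the spirit of Varchenko's theorem: to exhibit $(n-1)!/2$ bona fide real positive local maxima of $L_u$ on $\mathrm{sGr}(2,n)$, one in each of $(n-1)!/2$ combinatorially labelled cells, and then to invoke Theorem~\ref{thm:main} to conclude that these account for \emph{every} critical point.

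First I would work with the configuration parametrization underlying Lemma~\ref{lem:twolives}. Representing a generic point of $\mathrm{Gr}(2,n)$ by the $2\times n$ matrix with columns $\alpha_i\,(1,x_i)^{\top}$ $(\alpha_i\ne 0$, the $x_i$ distinct$)$ gives $q_{ij}=\alpha_i^2\alpha_j^2(x_i-x_j)^2=t_it_j(x_i-x_j)^2$ with $t_i:=\alpha_i^2$, so (\ref{eqn:implicitlikelihood}) pulls back to
\[
  \ell(t,x)\;=\;\sum_{i} d_i\log t_i\;+\;2\sum_{i<j}u_{ij}\log|x_i-x_j|\;-\;U\log\!\Big(\textstyle\sum_{k<l}t_kt_l(x_k-x_l)^2\Big),
\]
where $d_i=\sum_{j\ne i}u_{ij}$, $U=\sum_{i<j}u_{ij}$, and $\mathrm{sGr}(2,n)$ is the quotient of the map $(t,x)\mapsto(t_it_j(x_i-x_j)^2)_{i<j}$ by a $4$-dimensional symmetry group (the $\mathrm{PGL}_2$ reparametrizations of the configuration, suitably lifted, together with the simultaneous rescaling $t\mapsto\lambda t$). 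In the affine chart $\{\sum_I q_I=1\}$ the function $L_u$ is simply $\sum_I u_I\log q_I$, the logarithm of a master function with positive exponents; the only new feature relative to the classical Varchenko setting is that we are constrained to the \emph{curved} variety $\mathrm{sGr}(2,n)$, which is why the usual ``one maximum per bounded convex region'' reasoning does not apply verbatim.

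Next I would set up the cells. Over $\RR$, a configuration of $n$ distinct points on the circle $\mathbb{P}^1(\RR)$ has a well-defined circular order on $[n]$, and modulo $\mathrm{PGL}_2(\RR)$ these orders range over the $(n-1)!/2$ circular orderings of $[n]$ up to reflection (for $n\ge 3$). Combined with the positive torus $\{t_i>0\}$, this cuts the all-positive real locus of $\mathrm{sGr}(2,n)$ into $(n-1)!/2$ pairwise disjoint connected cells $\mathcal{C}_\omega$, each of full dimension $2n-4=\dim\mathrm{sGr}(2,n)$. The two geometric inputs I would need are: (i) the cells are genuinely disjoint, since the torus acts by positive scalars and hence preserves the circular order; and (ii) the closure $\overline{\mathcal C_\omega}$ inside the probability simplex $\Delta=\{q\ge 0,\ \sum_I q_I=1\}$ is compact with $\partial\mathcal C_\omega\subseteq\bigcup_I\{q_I=0\}$, because exiting a cell forces two circularly adjacent points to collide (sending the corresponding $q_{ij}$ to $0$) or forces a degeneration of the torus factor (again sending some $q_I$ to $0$). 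Granting (ii), on $\Delta$ we have $L_u=\sum_I u_I\log q_I\le 0$, tending to $-\infty$ along $\partial\mathcal C_\omega$ whenever every $u_{ij}>0$; hence $L_u$ attains a maximum over the compact set $\overline{\mathcal C_\omega}$ at an interior point $q^{(\omega)}\in\mathcal C_\omega$, which is a critical point of $L_u$ on $\mathrm{sGr}(2,n)$.

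Finally I would collect the consequences. Each $q^{(\omega)}$ is real and positive by construction; it is a smooth point of $\mathrm{sGr}(2,n)$, since it lies in the image of $\mathrm{Gr}(2,n)\setminus\bigcup_I\{p_I=0\}$ under the squaring map, whose differential is injective there and whose fibers are single torus orbits; and $\mathcal C_\omega$ contains a neighborhood of $q^{(\omega)}$ in the real locus of $\mathrm{sGr}(2,n)$, so $q^{(\omega)}$ is a local maximum of $L_u$ restricted to real points. As the cells are disjoint, the $(n-1)!/2$ points $q^{(\omega)}$ are distinct. By Theorem~\ref{thm:main}, $L_u$ has at most $(n-1)!/2$ complex critical points on the regular locus of $\mathrm{sGr}(2,n)$ away from the coordinate hyperplanes, so the $q^{(\omega)}$ are all of them; this also shows that there is exactly one critical point per cell and that none escapes the positive region. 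The case of general $u\in\NN^{\binom n2}$ should then follow from the dense open case $u_{ij}>0$ by a specialization argument. I expect the main obstacle to be item (ii): making the cell decomposition precise and, above all, proving that $\overline{\mathcal C_\omega}$ is compact with $\partial\mathcal C_\omega\subseteq\bigcup_I\{q_I=0\}$ — essentially a compactness statement controlling how configurations can degenerate at the boundary of a cell. (A Hessian computation at $q^{(\omega)}$, or strict concavity of $L_u$ in suitable gap coordinates on the cell, would upgrade these to nondegenerate maxima, but that is not required for the statement.)
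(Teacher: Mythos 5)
Your proposal is sound in outline and arrives at the right count, but it takes a genuinely different route from the paper. You work on the implicit model, decompose the positive real locus of $\mathrm{sGr}(2,n)$ into $(n-1)!/2$ cells indexed by circular orderings of $n$ points on $\PP^1(\RR)$ up to reflection, and extract one interior maximum per cell from compactness of the probability simplex; this mirrors the $\mathcal{M}_{0,n}$ argument of \cite{ST}. The paper instead works entirely upstairs on the parametric space $X_n^{\RR}\subset\RR^{2(n-2)}$: it decomposes $X_n^{\RR}$ into regions according to the sign vector of $(p_{ij})_{i<j}$, counts exactly $2^{n-2}(n-1)!$ such regions by a bijection with (permutation, sign-flip) pairs recovered from the inversions and the signs $q_{1i}$, and then applies a general lemma (Lemma~\ref{lem:real}) asserting that the likelihood $\prod p_{ij}^{2u_{ij}}/Q_n^{\sum u_{ij}}$ attains a local extremum on \emph{every} region, bounded or not, because the denominator dominates and the function tends to $0$ at infinity. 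Since $2^{n-2}(n-1)!$ is exactly the total number of critical points of the parametric likelihood, all are real local maxima, and reality and positivity descend to the implicit model because each $q_I$ is a square. The two region counts are consistent: the parametrization has degree $2^{n-1}$ over $\RR$, and $2^{n-2}(n-1)!/2^{n-1}=(n-1)!/2$.

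The main thing to note is that the obstacle you correctly single out --- proving that $\overline{\mathcal{C}_\omega}$ meets the boundary only along $\bigcup_I\{q_I=0\}$, i.e.\ controlling degenerations of configurations at the edge of a cell --- is precisely what the paper's choice of setting makes unnecessary: Lemma~\ref{lem:real} trades the compactness of cell closures for boundedness of the likelihood on unbounded regions, so no degeneration analysis is needed. Your item (ii) is provable (gauge-fix two far-apart columns to the identity block; positivity of all $q_I$ then confines the remaining parameters to a compact set, and a collision of circularly adjacent points forces some $q_{ij}\to 0$), but it is real work, as is your deferred specialization to data $u$ with some $u_{ij}=0$; if you pursue your route you should also verify that the cells are nonempty, open in the real locus, and pairwise disjoint as subsets of $\mathrm{sGr}(2,n)$, which requires checking that the circular-order class is invariant under the full real fiber of the squaring map (it is, since the $2^{n-1}$ sign flips do not move the column directions in $\PP^1(\RR)$).
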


Another consequence of Lemma~\ref{lem:twolives}, is that the model of projection DPPs is parameterized by $d \times n$ matrices $M_{d,n}$:
\begin{equation}\label{eqn:parameterization}
  q_I = \frac{(M_{d,n})_I^2}{ \sum_{J \in \binom{[n]}{d}} (M_{d,n})_J^2}
\end{equation}
where $(M_{d,n})_I$ is the maximal minor of $M_{d,n}$ whose columns are indexed by $I$.
We then have the parametric log-likelihood function for projection DPPs:
\begin{equation}\label{eqn:parametriclikelihood}
  L_u(M_{d,n}) = \sum_{I \in \binom{[n]}{d}} u_I \log(\det((M_{d,n})_I)^2) - 
  \big ( \sum_{I \in \binom{[n]}{d}} u_I \big ) \log\big ( \sum_{I \in \binom{[n]}{d}} \det((M_{d,n})_I)^2 \big).
\end{equation}
If we restrict to the matrices $M_{d,n}$ whose first $d \times d$ square is the identity, we get a $2^{n-1}$-degree parameterization; see \cite[Section 4]{DFRS}.
Therefore the number of critical points of (\ref{eqn:parametriclikelihood}) is $2^{n-1}$ times the ML degree of the squared Grassmannian.
\begin{example}[$d = 2, n = 3$]\label{ex:basecase}
  The implicit log-likelihood function is
  \begin{align*}
    L_u(q) = u_{12} \log(q_{12}) + u_{13} \log(q_{13}) + u_{23} \log(q_{23}) - (u_{12} + u_{13} + u_{23}) \log(q_{12} + q_{13} + q_{23}).
  \end{align*}
  Because $\rm{sGr}(2,3) = \PP^2$, the MLE is $u$ and the ML degree is one.
  We can find the same answer parametrically: if $M_{2,3} = \begin{pmatrix} 1 & 0 & x_3 \\ 0 & 1 & y_3  \end{pmatrix}$, then 
  \begin{align*}
    L_u(M_{2,3}) = u_{12} \log(1) + u_{13} \log(y_3^2) + u_{23} \log(x_3^2) - (u_{12} + u_{13} + u_{23}) \log(1 + x_3^2 + y_3^2)
  \end{align*}
  whose partial derivatives are
  \begin{align*}
    \frac{\partial L_u}{\partial x_3} = \frac{2u_{23}}{x_3} - \frac{2(u_{12} + u_{13} + u_{23})x_{3}}{1 + x_3^2 + y_3^2} = 0, &&
    \frac{\partial L_u}{\partial y_3} = \frac{2u_{13}}{y_3} - \frac{2(u_{12} + u_{13} + u_{23})y_{3}}{1 + x_3^2 + y_3^2} = 0.
  \end{align*}
  We can solve these equations by hand to find $x_3 = \pm \sqrt{\frac{u_{23}}{u_{12}}}$ and $y_3 = \pm \sqrt{\frac{u_{13}}{u_{12}}}$ for a total of four solutions.
  Our parameterization sends the parametric solution to the implicit one:
  \begin{align*}
    \begin{pmatrix}
      1 & 0 & \pm \frac{\sqrt{u_{23}}}{\sqrt{u_{12}}}\\
      0 & 1 & \pm \frac{\sqrt{u_{13}}}{\sqrt{u_{12}}}.
    \end{pmatrix}
    \mapsto (1: \frac{u_{13}}{u_{12}} : \frac{u_{23}}{u_{12}}).
  \end{align*}
\end{example}

Likelihood inference on the Grassmannian has been studied in other contexts as well; see \cite[Section 4]{DFRS} for an overview and comparison of three different models.
Of particular interest is the configuration space of $X(d, n)$ which is modeled by the Grassmannian modulo a torus action.
When $d = 2$, $X(2, n)$ is the moduli space $\mathcal M_{0,n}$ of $n$ marked points in $\PP^1$.
Similar to our situation, the number of critical points of the likelihood function on $\mathcal M_{0,n}$ grows exponentially and all $(n - 3)!$ critical points are real \cite[Proposition 1]{ST}.

This paper is organized as follows.
In Section~\ref{sec:setup}, we outline the approach for the proof of Theorem~\ref{thm:main} and study the topology of the parametric model arising from (\ref{eqn:parameterization}).
In Section~\ref{sec:stratification}, we study the stratification of the parametric model by a deletion map and prove Theorem~\ref{thm:main}.
In Section~\ref{sec:realsolutions}, we turn to real critical points and prove Theorem~\ref{thm:nonnegative}.
We finish by computing the MLE for random data for $4\leq n\leq 9$ and we record the runtimes.

Code for the numerical experiments in Section~\ref{sec:realsolutions} 
is available 
in the \verb+MathRepo+ collection via \url{https://mathrepo.mis.mpg.de/likelihood_geometry_squared_grassmannian}.

\section{Parametric Model}\label{sec:setup}
Our approach to computing the ML degree of ${\rm sGr}(2,n)$ is topological. 
If $X \subseteq \PP^n$ is a projective variety, then $\mathcal H = \{(x_0 : \cdots : x_n) \in X \mid (\sum^n_{i=0}x_i) \cdot \prod_{i=0}^n x_i = 0\}$ is the set of points of $X$ where the log-likelihood function is undefined.
The set $X \backslash \mathcal H \subseteq (\CC^*)^{n+1}$ is a \emph{very affine variety}, meaning that it is a closed subvariety of the torus $(\CC^*)^{n+1}$.
We use the following theorem, which first appeared as \cite[Theorem 19]{CHKS} under stronger hypotheses. 
\begin{theorem}[\cite{huh}, Theorem 1]\label{thm:mleuler}
  If the very affine variety $X \backslash \mathcal H$ is smooth of dimension $d$, then the ML degree of $X$ equals the signed Euler characteristic $(-1)^d\chi(X \backslash \mathcal H)$.
\end{theorem}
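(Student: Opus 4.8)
The plan is to realize the critical points of the log-likelihood function as the zeros of a generic logarithmic $1$-form on $U := X \setminus \mathcal H$, count those zeros by a Chern class computation, and convert the answer into an Euler characteristic via the logarithmic Gauss--Bonnet theorem. Let $\ell_0, \dots, \ell_n, \ell_\infty$ denote the linear forms $x_0, \dots, x_n$ and $\sum_i x_i$ cutting out $\mathcal H$, with weights $u_0,\dots,u_n,-\sum_i u_i$; these weights sum to zero, so the $d\log$ of the corresponding master function is a well-defined $1$-form on projective $U$. Concretely, the differential of $L_u$ is the restriction to $U$ of the logarithmic $1$-form $\omega_u = \sum_j u_j \, d\log \ell_j$, and its zeros on $U$ are exactly the critical points of $L_u$. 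Since $U$ is very affine, the monomials $\ell_j/\ell_k$ embed $U$ as a closed subvariety of a torus $T = (\CC^*)^N$; the forms $d\log \ell_j$ are pullbacks of the translation-invariant $1$-forms on $T$, which form a global frame for $\Omega^1_T$, and because $U \hookrightarrow T$ is a closed immersion these pulled-back forms generate $\Omega^1_U$ at every point.

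First I would choose a smooth compactification $\overline U \supseteq U$ whose boundary $D = \overline U \setminus U$ is a simple normal crossings divisor; the smoothness of $U$ is precisely what allows a resolution compatible with the torus structure. The invariant forms $d\log \ell_j$, having at worst logarithmic poles along the toric boundary, extend to global sections of the logarithmic cotangent bundle $\Omega^1_{\overline U}(\log D)$, a vector bundle of rank $d = \dim U$. Thus each $\omega_u$ is a global section of $\Omega^1_{\overline U}(\log D)$, and as $u$ varies over the parameter space these sections span a linear system that generates the bundle over $U$. A Bertini-type argument then shows that for generic $u$ the section $\omega_u$ has only finitely many zeros, all reduced.

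The crux --- and the step I expect to be the main obstacle --- is to prove that for generic $u$ none of the zeros of $\omega_u$ escapes to the boundary $D$, so that the zero scheme of $\omega_u$ on the compact $\overline U$ coincides with the set of critical points lying in $U$. This is exactly where smoothness of $U$ is essential: along each boundary divisor the logarithmic form contributes a residue, and one must verify that genericity of $u$ keeps the combined residues from vanishing, ruling out critical points drifting to infinity or onto the arrangement $\mathcal H$. Granting this, the number of zeros of a generic section of a rank-$d$ bundle equals the degree of its top Chern class, so the ML degree of $X$ equals $\int_{\overline U} c_d\big(\Omega^1_{\overline U}(\log D)\big)$.

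Finally I would invoke the logarithmic Gauss--Bonnet theorem: for smooth projective $\overline U$ of dimension $d$ with simple normal crossings boundary $D$, the Euler characteristic $\chi(U)$ is the integral over $\overline U$ of the top Chern class of the logarithmic tangent bundle $T_{\overline U}(-\log D)$, and since $c_d(E^\vee) = (-1)^d c_d(E)$ this gives $\chi(U) = (-1)^d \int_{\overline U} c_d\big(\Omega^1_{\overline U}(\log D)\big)$. Combining this with the Chern class count yields that the ML degree of $X$ equals $(-1)^d \chi(X \setminus \mathcal H)$, as claimed. A secondary point worth isolating is that the final count is independent of the chosen compactification $\overline U$, which holds because both the number of critical points and the Euler characteristic are intrinsic invariants of $U$.
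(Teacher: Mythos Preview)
The paper does not prove this theorem: it is quoted as a known result, attributed to Huh (with the earlier special case due to Catanese--Ho\c{s}ten--Khetan--Sturmfels), and is used as a black box to convert ML degree computations into Euler characteristic computations. So there is no proof in the paper to compare your proposal against.

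That said, your outline is a reasonable sketch of the actual argument behind the cited result. The strategy of interpreting critical points of $L_u$ as zeros of a logarithmic $1$-form, compactifying to a pair $(\overline U, D)$ with simple normal crossings boundary, counting zeros via $c_d\big(\Omega^1_{\overline U}(\log D)\big)$, and then invoking the logarithmic Gauss--Bonnet/Poincar\'e--Hopf identity $\chi(U) = (-1)^d \int_{\overline U} c_d\big(\Omega^1_{\overline U}(\log D)\big)$ is exactly how the proof proceeds. You have also correctly located the genuine difficulty: showing that for generic $u$ the zero locus of $\omega_u$ avoids the boundary $D$. This is precisely the step that separates the CHKS version (which assumes additional hypotheses on the compactification, essentially that $\overline U$ can be taken sch\"on so the boundary behavior is controlled combinatorially) from Huh's version (which removes those hypotheses). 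Your sketch waves at smoothness of $U$ as the reason this works, but smoothness alone is not enough to rule out boundary zeros by a naive residue argument; Huh's proof uses a more delicate analysis involving the characteristic cycle and a result of Franecki--Kapranov on perverse sheaves on semiabelian varieties. If you were to flesh this out, that is where the real work lies.
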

The Euler characteristic is a topological invariant that can be defined as the alternating sum of the Betti numbers of the space.
We will often make use of the excision property of the Euler characteristic: if $Z = X \sqcup Y$, then $\chi(Z) = \chi(X) + \chi(Y)$.
We also rely on the fibration property of the Euler characteristic; if $f: X \to Y$ is a fibration, then $\chi(X) = \chi(Y) \chi(F)$, where $F$ is the fiber of a point in $Y$.
For the maps we consider, proving that the fibers of a map are homeomorphic suffices to show that the map is a fibration.

We apply Theorem~\ref{thm:mleuler} to the squared Grassmannian.
The \emph{open squared Grassmannian} is the very affine variety
\begin{align*}
  {\rm sGr}(d,n)^\circ = \big \{(q_I)_{I \in \binom{[n]}{d}} \in {\rm sGr}(d, n) \mid
  \big ( \sum_{I \in \binom{[n]}{d}} q_I \big ) \cdot \prod_{I \in \binom{[n]}{d}} q_{I} \neq 0 \big \} \subseteq (\CC^*)^{\binom{n}{d}}.
\end{align*}
The variety ${\rm sGr}(d,n)^\circ$ is smooth because ${\rm Gr}(d,n)$ is smooth and the Jacobian of the squaring map has full rank on points in the preimage of ${\rm sGr}(d,n)^\circ$. 
Because $\dim({\rm sGr}(d,n)) = d(n-d)$, by Theorem~\ref{thm:mleuler}, the ML degree of $\mathrm{sGr}(d,n)$ is $(-1)^{d(n-d)} \chi({\rm sGr}(d,n)^\circ)$.

We now turn to the case $d = 2$ and parameterize the open squared Grassmannian by 
\begin{equation}\label{eqn:2parameterization}
  X_n \to {\rm sGr}(2,n)^\circ, \ \ 
  M_n = 
  \begin{pmatrix}
    1 & 0 & x_3 & x_4 & \cdots & x_n \\
    0 & 1 & y_3 & y_4 & \cdots & y_n
  \end{pmatrix}
  \mapsto
  (p_{ij}^2)_{1 \leq i < j \leq n}
\end{equation}
where $p_{ij}$ is the $2$-minor formed from columns $i$ and $j$ of $M_n$ and $X_n$ is the subset of $\CC^{2(n-2)}$ such that the image of the parameterization is ${\rm sGr}(2,n)^\circ$.
To be specific, we define
\begin{align*}
  Q_n = \sum_{1 \leq i < j \leq n} p_{ij}^2 && \textrm{and} && 
  X_n = \big \{
  M_n
  \in \mathbb{C}^{2(n-2)}
    \colon Q_n \cdot \big ( \prod_{1 \leq i< j \leq n} p_{ij} \big ) \neq 0 
  \big \}.
\end{align*}
We can explicitly write the parametric log-likelihood function in the $d = 2$ case as
  \begin{equation}\label{eqn:2parametriclikelihood}
    L_u(M_n) = \sum_{i = 3}^n  \big (u_{1i}\log(y_i^2) + u_{2i}\log(x_i^2)\big ) + \sum_{\mathclap{3 \leq i < j \leq n}}u_{ij} \log((x_iy_j - y_ix_j)^2)  - \sum_{\mathclap{1 \leq i < j \leq n}} u_{ij} \log(Q_n).
  \end{equation}
  The map in (\ref{eqn:2parameterization}) has degree $2^{n-1}$ with no ramification points because we can flip the signs of any row and column of the matrix $M_n$ independently; see \cite[Remark 4.4]{DFRS}.
  By multiplicativity of the Euler characteristic, the implicit and parametric models are related by:
\begin{equation}\label{eqn:powertwo}
  \chi(X_n) = 2^{n-1} \chi({\rm sGr}(2, n)^\circ) = 2^{n-1}{\rm ML Deg}({\rm sGr}(2,n))  = \#\{ \textrm{critical points of (\ref{eqn:2parametriclikelihood})}\}.
\end{equation}
By (\ref{eqn:powertwo}), we can compute the number of critical points of both the implicit and parametric log-likelihood functions from $\chi(X_n)$.
We compute $\chi(X_n)$ in Section~\ref{sec:stratification} by an argument similar to that in \cite{ABFKSTL, CHKO, EGPSY}.
In particular, we wish to define a map $X_{n + 1} \to X_n$ which deletes the last column and use the fact that this map is a stratified fibration to compute the Euler characteristic inductively. 
However, this deletion map is not well defined, because $Q_n$ vanishes on points in $X_{n+1}$, so we define this map only on the open subset of $X_{n + 1}$ where $Q_n$ does not vanish, denoted $  X_{n + 1}^\circ  = \left \{  M_{n+1} \in X_{n + 1} \colon Q_n \neq 0  \right \}.$
We define the~projection
\begin{align*}
  \pi_{n + 1}: X_{n + 1}^\circ \to X_n, && 
  \begin{pmatrix}
    1 & 0 & x_3 & x_4 & \cdots & x_n & x_{n+1}\\
    0 & 1 & y_3 & y_4 & \cdots & y_n & y_{n+1}
  \end{pmatrix}
  \mapsto
  \begin{pmatrix}
    1 & 0 & x_3 & x_4 & \cdots & x_{n}\\
    0 & 1 & y_3 & y_4 & \cdots & y_{n}
  \end{pmatrix}
\end{align*}
and argue that the change from $X_{n + 1}$ to $X_{n + 1}^\circ$ does not affect the Euler characteristic.
To do so, we observe that the fiber of a point $M_n \in X_n$ under $\pi_{n + 1}$ is the complement in $(\CC^*)^2$ of the lines $p_{i(n + 1)}$ and conic $Q_{n + 1}$. 
The lines $p_{i(n+1)}$ all pass through the origin, so the key to understanding the geometry of the fibers is to understand the conic $Q_{n+1}$.
We can write
\begin{align*}
  Q_{n + 1} = \begin{pmatrix}1 & x_{n+1} & y_{n+1} \end{pmatrix}   \begin{pmatrix}
    Q_{n} & 0 & 0\\
    0 & 1 + \sum_{i = 3}^{n} y_i^2 & - \sum_{i = 3}^{n} x_iy_i\\
    0 & - \sum_{i = 3}^{n} x_iy_i & 1 +	\sum_{i = 3}^{n} x_i^2
  \end{pmatrix}
 \begin{pmatrix}1  \\x_{n+1} \\ y_{n+1} \end{pmatrix}.
\end{align*}
Taking the determinant of the matrix, we find that the discriminant of $Q_{n + 1}$ as a conic in $x_{n + 1}$ and $y_{n + 1}$ is $Q_n^2$.
We  prove that restricting to $X_{n + 1}^\circ$ does not change the Euler characteristic.
\begin{lemma}\label{lem:complement}
  For $n \geq 3$, we have $\chi(X_{n+1}) = \chi(X_{n + 1}^\circ)$. 
\end{lemma}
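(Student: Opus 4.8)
The plan is to compute $\chi(X_{n+1}\setminus X_{n+1}^\circ)$ directly and show it vanishes; the lemma then follows from the excision property
\[
  \chi(X_{n+1}) = \chi(X_{n+1}^\circ) + \chi(X_{n+1}\setminus X_{n+1}^\circ).
\]
By the definition of $X_{n+1}^\circ$, the complement $X_{n+1}\setminus X_{n+1}^\circ$ is exactly the set of matrices $M_{n+1}\in X_{n+1}$ with $Q_n = 0$, i.e.\ the locus in $\CC^{2(n-1)}$ cut out by the equation $Q_n = 0$ together with the open conditions $Q_{n+1}\neq 0$ and $p_{ij}\neq 0$ for all $1\le i<j\le n+1$.

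First I would introduce the $\CC^*$-action on $\CC^{2(n-1)}$ that rescales only the last column of $M_{n+1}$: for $t\in\CC^*$ send $(x_3,\dots,x_n,x_{n+1},y_3,\dots,y_n,y_{n+1})$ to $(x_3,\dots,x_n,tx_{n+1},y_3,\dots,y_n,ty_{n+1})$. I claim this action preserves $X_{n+1}\setminus X_{n+1}^\circ$. Indeed, the Pl\"ucker coordinates $p_{ij}$ with $i,j\le n$ and the quadric $Q_n$ are unchanged, so the conditions $Q_n = 0$ and $p_{ij}\neq 0$ for $i<j\le n$ are preserved; the coordinates $p_{i(n+1)}$ with $i\le n$ are all multiplied by $t$, so the conditions $p_{i(n+1)}\neq 0$ are preserved as well. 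For $Q_{n+1}$, the block decomposition displayed just before the lemma shows that on the locus $Q_n = 0$ the quadric $Q_{n+1}$ is homogeneous of degree $2$ in $(x_{n+1},y_{n+1})$ — this is the sharpest form of the ``discriminant equals $Q_n^2$'' observation, since the non-homogeneous part of $Q_{n+1}$ is precisely the $(1,1)$-entry $Q_n$ of that matrix. Hence $Q_{n+1}$ is scaled by $t^2$, so the condition $Q_{n+1}\neq 0$ is preserved.

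Next I would observe that this action is free: a fixed point must have $x_{n+1}=y_{n+1}=0$, which forces $p_{1(n+1)} = y_{n+1} = 0$, contradicting $p_{1(n+1)}\neq 0$. Since every fiber of the orbit map $X_{n+1}\setminus X_{n+1}^\circ \to (X_{n+1}\setminus X_{n+1}^\circ)/\CC^*$ is a copy of $\CC^*$, this orbit map is a fibration in the sense used throughout the paper, so multiplicativity of the Euler characteristic gives $\chi(X_{n+1}\setminus X_{n+1}^\circ) = \chi\big((X_{n+1}\setminus X_{n+1}^\circ)/\CC^*\big)\cdot\chi(\CC^*) = 0$, because $\chi(\CC^*)=0$. (If one prefers to sidestep the quotient, restricting to the free $S^1\subset\CC^*$ action realizes $X_{n+1}\setminus X_{n+1}^\circ$ as a principal $S^1$-bundle and yields the same conclusion.) This completes the argument.

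The only point requiring care is the verification that the defining conditions of $X_{n+1}\setminus X_{n+1}^\circ$ are $\CC^*$-invariant, and the crux there is the homogeneity of $Q_{n+1}$ in the last two variables once $Q_n$ vanishes; everything else — the excision identity and the vanishing of $\chi$ for a space with a free torus action — is formal. I do not expect to need any stratification of the base, which is why this approach seems preferable to the alternative of fibering $X_{n+1}\setminus X_{n+1}^\circ$ over the $\{Q_n=0\}$-locus in the $M_n$-coordinates and identifying the fibers as complements of line arrangements through the origin.
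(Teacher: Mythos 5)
Your argument is correct, but it takes a genuinely different route from the paper. The paper also begins with excision and the reduction to showing $\chi(X_{n+1}\setminus X_{n+1}^\circ)=0$, but it establishes this by restricting the deletion map to $X_{n+1}\setminus X_{n+1}^\circ$ and identifying every nonempty fiber as the complement in $(\CC^*)^2$ of $n-1$ lines through the origin (the $n-2$ lines $p_{i(n+1)}$, $i\ge 3$, together with $Q_{n+1}$, which degenerates to a double line through the origin once $Q_n=0$); each such fiber has Euler characteristic zero, so the fibration property finishes the proof. You instead put a free $\CC^*$-action on the whole locus by scaling the last column and conclude formally that $\chi=0$. The two proofs rest on the same geometric observation — that on $\{Q_n=0\}$ all the data attached to the last column ($p_{i(n+1)}$ and $Q_{n+1}$) becomes homogeneous in $(x_{n+1},y_{n+1})$ — which is exactly what the paper phrases as the conic degenerating to a double line through the origin. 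What your version buys is that you never have to identify the fibers or check that they are mutually homeomorphic over the (possibly singular, non-equidimensional) image of $\{Q_n=0\}$ in $X_n$-coordinates; freeness plus the standard fact that a variety with a fixed-point-free $\CC^*$-action has vanishing Euler characteristic does everything. What the paper's version buys is consistency with the rest of Section 3, where the same deletion map and the same fiber descriptions (complements of lines and a conic in $(\CC^*)^2$ or $\CC^2$) are reused for the stratified-fibration computation, so the lemma's proof doubles as a warm-up for Lemma 3.2. Your only mild debt is the unproved (but completely standard, and no less rigorous than the paper's own conventions about fibrations) claim that a free $\CC^*$-action forces $\chi=0$; your verification that the action preserves $X_{n+1}\setminus X_{n+1}^\circ$ and is free (via $p_{1(n+1)}=y_{n+1}\neq 0$) is accurate.
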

\begin{proof}
  By the excision property, $\chi(X_{n+1}) = \chi(X_{n + 1}^\circ) +\chi(X_{n+1} \backslash X_{n + 1}^\circ)$.
  We shall prove that $\chi(X_{n+1} \backslash X_{n + 1}^\circ) = 0$ by showing that the map below is a fibration on its image:
  \begin{align*}
    X_{n + 1} \backslash X_{n + 1}^\circ \to (\CC^*)^{2(n-2)}, &&
    \begin{pmatrix}
     1 & 0 & x_3 & x_4 & \cdots & x_n & x_{n+1}\\
     0 & 1 & y_3 & y_4 & \cdots & y_n & y_{n+1}
  \end{pmatrix}
  \mapsto
  \begin{pmatrix}
    1 & 0 & x_3 & x_4 & \cdots & x_{n}\\
    0 & 1 & y_3 & y_4 & \cdots & y_{n}
  \end{pmatrix}.
  \end{align*}
  It suffices to show that the fibers  are homeomorphic. 
  Every nonempty fiber $F$ is the complement in $(\CC^*)^2$ of $n - 1$ lines through the origin: each minor $p_{i(n + 1)}$ for $i \geq 3$ contributes one line and  $Q_{n + 1}$ degenerates into a double line since $Q_n = 0$.
  The Euler characteristic of finitely many lines through the origin in $(\CC^*)^2$ is zero because each line is a $\PP^1$ with two points removed and the lines do not intersect. 
  Because $\chi((\CC^*)^2) = 0$, the Euler characteristic of the complement of $n -1$ lines through the origin in $(\CC^*)^2$ is zero, too.
  By the fibration property of the Euler characteristic and because $\chi(F) = 0$, we conclude that $\chi(X_{n + 1} \backslash X_{n + 1}^\circ) = 0$.
\end{proof}

\section{Stratification of the Parametric Model}\label{sec:stratification}
We now prove that the map $\pi_{n + 1}$ is a stratified fibration and compute $\chi(X_n)$.
A map $f: X \to Y$ between complex algebraic varieties with a stratification $\{S_\alpha\}_{\alpha=1}^m$ of $Y$ by closed sets is a
\emph{stratified fibration} if the restriction of $f$ to each open stratum $S^\circ_\alpha = S_\alpha \backslash \bigcup_{S_\beta \subsetneq S_\alpha} S_\beta$ is a fibration with fiber denoted $F_{S_\alpha}$.
We use M\"{o}bius inversion with the fibration property of the Euler characteristic, to compute Euler characteristics along stratified fibrations.
\begin{lemma}[\cite{ABFKSTL}, Lemma 2.3]\label{lem:combinatorics}
  Let $f: X \to Y$ be a stratified fibration.
  Let $\mathcal{S}$ be the poset of closed strata $S_\alpha$ of $f$, let $F_{S_\alpha}$ be the fiber of a generic point in $S_\alpha$, and let $\mu$ be the M\"{o}bius function of $\mathcal S$.
  Then
  \begin{align*}
    \chi(X) 
    &= \chi(Y) \cdot \chi(F_Y) + \sum_{S_\alpha \in \mathcal S} \chi(S_\alpha) \cdot \sum_{S_\beta \in \mathcal S, S_\beta \supseteq S_\alpha} \mu(S_\alpha, S_\beta) \cdot (\chi(F_{S_\beta}) - \chi(F_Y)).
  \end{align*}
\end{lemma}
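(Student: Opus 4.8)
The plan is to deduce this purely formally from the excision and fibration properties of the Euler characteristic recalled earlier, together with Möbius inversion on $\mathcal{S}$; no geometry beyond the definition of a stratified fibration is needed. First I would stratify the base into locally closed pieces: set $Y^\circ = Y \setminus \bigcup_\alpha S_\alpha$ for the dense open locus over which $f$ is a fibration with generic fiber $F_Y$, and $S_\alpha^\circ = S_\alpha \setminus \bigcup_{S_\beta \subsetneq S_\alpha} S_\beta$ for the open stratum of each closed stratum, so that $Y = Y^\circ \sqcup \bigsqcup_\alpha S_\alpha^\circ$. Pulling back along $f$ and using that $f$ restricts to a fibration over each open stratum, excision and the fibration property give
\[
  \chi(X) \;=\; \chi(Y^\circ)\,\chi(F_Y) \;+\; \sum_\alpha \chi(S_\alpha^\circ)\,\chi(F_{S_\alpha}).
\]

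Next I would rewrite this in terms of the closed strata. Substituting $\chi(Y^\circ) = \chi(Y) - \sum_\alpha \chi(S_\alpha^\circ)$ turns the display into
\[
  \chi(X) \;=\; \chi(Y)\,\chi(F_Y) \;+\; \sum_\alpha \chi(S_\alpha^\circ)\bigl(\chi(F_{S_\alpha}) - \chi(F_Y)\bigr),
\]
which already matches the claimed formula except that $\chi(S_\alpha^\circ)$ appears where a Möbius-weighted sum of $\chi(S_\beta)$ should. Since each closed stratum is partitioned by the open strata it contains, $\chi(S_\alpha) = \sum_{S_\beta \subseteq S_\alpha} \chi(S_\beta^\circ)$, and Möbius inversion on $\mathcal{S}$ (ordered by inclusion) yields $\chi(S_\alpha^\circ) = \sum_{S_\beta \subseteq S_\alpha} \mu(S_\beta, S_\alpha)\,\chi(S_\beta)$. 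Plugging this in and swapping the order of summation, so that the outer sum runs over what was $S_\beta$ (renamed $S_\alpha$) and the inner sum over closed strata $S_\beta \supseteq S_\alpha$, collapses everything to the stated identity.

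I expect the genuinely delicate points to be two. One is making sure the notion of ``the fiber $F_{S_\alpha}$ over a generic point of $S_\alpha$'' is well posed: this is exactly the content of the stratified-fibration hypothesis, which guarantees the fiber is constant up to homeomorphism over the open stratum $S_\alpha^\circ$, and one must remember to evaluate $\chi(F_{S_\alpha})$ on $S_\alpha^\circ$ rather than on the closed set $S_\alpha$. The other is the Möbius inversion step: one needs that $\mathcal{S}$ is a bona fide finite poset, in particular that the closure of a stratum is a union of strata, so that $S_\alpha$ really is the disjoint union of the $S_\beta^\circ$ with $S_\beta \subseteq S_\alpha$, and that the ambient base $Y$ is accounted for consistently, which it is, since whether or not $Y$ is adjoined to $\mathcal{S}$ as its top element, any term with $S_\beta = Y$ contributes $\chi(F_{S_\beta}) - \chi(F_Y) = 0$, so the leading term $\chi(Y)\chi(F_Y)$ is never double counted. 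Everything else is routine index bookkeeping.
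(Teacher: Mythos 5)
Your argument is correct: the decomposition of $Y$ into the open strata, multiplicativity of $\chi$ over each piece, the substitution $\chi(Y^\circ) = \chi(Y) - \sum_\alpha \chi(S_\alpha^\circ)$, and Möbius inversion of $\chi(S_\alpha) = \sum_{S_\beta \subseteq S_\alpha} \chi(S_\beta^\circ)$ followed by an exchange of summation reproduce the stated identity exactly, and you correctly flag the two hypotheses that make it work (constancy of the fiber on each open stratum, and the closed strata being partitioned by the open strata they contain). The paper itself gives no proof, importing the lemma from \cite{ABFKSTL}; your derivation is essentially the standard one found there, so there is nothing to compare beyond noting the match.
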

In our situation, only the generic fiber contributes: the sums vanish and $\chi(X) = \chi(Y)~\chi(F_Y)$. 
We first describe the stratification of $X_n$ and the fibers of $\pi_{n + 1}$.
\begin{lemma}\label{lem:stratification}
  The map $\pi_{n + 1}$ is a stratified fibration with stratification
  \begin{align*}
    \mathcal S = \{ X_n \} \cup \{S_i \mid 1 \leq i \leq n\} \cup \{S_i \cap S_j \mid 1 \leq i < j \leq n\},
  \end{align*}
  where $S_i = \big\{M_n \in X_n \mid \sum_{j = 1}^{n} p_{ij}^2 = 0\big\}$.
  The fibers of $\pi_{n + 1}$ are the complements of $n$ lines $p_{i(n+1)}$ and a conic $Q_{n + 1}$ in $\CC^2$ where the intersections of the lines with the conic are
  \begin{center}
    \begin{tabular}{ll}
      $Q_{n + 1} \cap p_{k(n+1)} = \textrm{two points}$ for all $k$ & in  $F_{X_n}$.\\
      $Q_{n + 1} \cap p_{k(n+1)} = \begin{cases} \textrm{two points for $k \neq i$}\\ \emptyset \textrm{ for $k = i$}.  \end{cases}$ & in $F_{S_i}$.\\
      $Q_{n + 1} \cap p_{k(n+1)} = \begin{cases} \textrm{two points for $k \neq i,j$}\\ \emptyset \textrm{ for $k \in \{i,j\}$} \end{cases}$ & in $F_{S_i \cap S_j}$.
    \end{tabular}
  \end{center}
\end{lemma}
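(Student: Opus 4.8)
The plan is to analyze the fiber of $\pi_{n+1}$ over a point $M_n \in X_n$ directly and read off from its geometry both the stratification of $X_n$ and the fiber types. Fix $M_n \in X_n$. The fiber consists of all choices of last column $(x_{n+1}, y_{n+1})$ such that the resulting $M_{n+1}$ lies in $X_{n+1}^\circ$, i.e. such that $Q_{n+1} \neq 0$ and $p_{i(n+1)} \neq 0$ for all $i$ with $1 \le i \le n$ (the other Plücker coordinates $p_{ij}$, $i<j\le n$, are already nonzero since $M_n \in X_n$). So the fiber is the complement in $\CC^2_{(x_{n+1},y_{n+1})}$ of the $n$ lines $\{p_{i(n+1)} = 0\}$ together with the conic $\{Q_{n+1} = 0\}$. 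Each line $p_{i(n+1)} = 0$ passes through the origin: for $i = 1$ it is $\{y_{n+1} = 0\}$, for $i = 2$ it is $\{x_{n+1} = 0\}$, and for $i \ge 3$ it is $\{x_i y_{n+1} - y_i x_{n+1} = 0\}$. Using the block form of $Q_{n+1}$ displayed above, the conic is a plane conic in $(x_{n+1}, y_{n+1})$ whose determinant — hence discriminant — equals $Q_n^2$; since $Q_n \neq 0$ on $X_n$, the conic is always smooth.

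Next I would compute, for each line $\ell_k = \{p_{k(n+1)} = 0\}$, the intersection $\ell_k \cap \{Q_{n+1} = 0\}$. Parametrizing $\ell_k$ linearly and substituting into $Q_{n+1}$ gives a degree-two equation in the parameter; the number of solutions is $2$, $1$, or $0$ according to whether the restricted discriminant is nonzero, zero, or the quadratic is identically zero. The quadratic cannot be identically zero because $Q_{n+1}$ restricted to any line through the origin has nonzero leading term (the quadratic part of $Q_{n+1}$ in $(x_{n+1}, y_{n+1})$ is positive definite up to scaling, being $(1+\sum y_i^2)x_{n+1}^2 - 2(\sum x_iy_i)x_{n+1}y_{n+1} + (1+\sum x_i^2)y_{n+1}^2$, whose discriminant is $-Q_n < 0$ so it never vanishes on a real line — and over $\CC$ one checks directly it is not a square times a linear form when $Q_n \neq 0$). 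A short calculation identifies the restricted discriminant along $\ell_k$: it vanishes precisely when $\sum_{j} p_{kj}^2 = 0$, i.e. when $M_n \in S_k$. This gives the three fiber types: generically two points on every line, one line tangent (empty intersection, in the sense that the two intersection points collide at a point already on the line and hence removed — here I should be careful and state the intersection is "empty" meaning no new point is removed, because the tangency point lies on $\ell_k$ which is itself deleted) when $M_n \in S_k^\circ$, and two tangencies when $M_n \in S_i \cap S_j$.

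I would then check that the sets $S_i = \{M_n \in X_n : \sum_{j=1}^n p_{ij}^2 = 0\}$ are closed in $X_n$ and that $\mathcal{S} = \{X_n\} \cup \{S_i\} \cup \{S_i \cap S_j\}$ is a stratification by closed sets — this is immediate from the definitions, with the only point to verify being that the relevant fibers are constant (up to homeomorphism) on each open stratum $S_\alpha^\circ$. For this, restricted over $S_i^\circ$ say, the conic $Q_{n+1}$ varies but remains smooth (discriminant $Q_n^2 \neq 0$), the $n$ lines through the origin vary but remain distinct (distinctness follows from $M_n \in X_n$ plus $p_{k(n+1)}$ being genuinely a function of the last column), and the combinatorial type of the line-conic configuration is constant by the discriminant computation; one then argues these configurations are all ambient-homeomorphic to a fixed model, so $\pi_{n+1}$ is a fibration over $S_i^\circ$. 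The same reasoning handles $X_n$ itself and the deepest strata $S_i \cap S_j$.

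The main obstacle I expect is this last homeomorphism-type claim: showing that two complements in $\CC^2$ of "$n$ concurrent lines plus a smooth conic meeting a prescribed subset of the lines tangentially and the rest transversally" are homeomorphic whenever the discrete incidence data agree. I would handle it by exhibiting an explicit (semialgebraic or even linear-plus-rational) ambient automorphism of $\CC^2$ carrying one configuration to a normal form — moving the conic to $x_{n+1}y_{n+1} = 1$ or to a standard smooth conic, then using a diagonal/rotational action to normalize the lines — or, more cheaply, by invoking Thom's isotopy lemma over the connected base $S_\alpha^\circ$ to conclude local triviality, which is all Lemma 2.5 requires. Everything else (the discriminant being $Q_n^2$, the count of intersection points, closedness of the $S_i$) is routine linear algebra with the displayed block matrix.
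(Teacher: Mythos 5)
Your overall strategy is the same as the paper's -- restrict $Q_{n+1}$ to each line $p_{k(n+1)}=0$ and read the stratification off from when that restriction degenerates -- but the case analysis at the heart of it is wrong in a way that matters downstream. Since $Q_{n+1}$ has no terms linear in $(x_{n+1},y_{n+1})$, its restriction to the line through the origin in direction $(x_k,y_k)$ is $q(x_k,y_k)\,t^2+Q_n$, where $q$ is the quadratic part; the restricted discriminant is $-4\,q(x_k,y_k)\,Q_n$, which (as $Q_n\neq0$ on $X_n$) vanishes exactly when the leading coefficient $q(x_k,y_k)=\sum_j p_{kj}^2$ vanishes. This contradicts your claim that the restriction to every line has nonzero leading term: over $\CC$ the form $q$ vanishes on exactly two lines through the origin, and the directions $(x_k,y_k)$ hitting those lines are precisely the strata $S_k$. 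Your trichotomy ``$2$, $1$, or $0$ solutions according to discriminant nonzero / zero / quadratic identically zero'' therefore misfires: in the degenerate case the restriction is the nonzero \emph{constant} $Q_n$, so there are zero affine intersection points, not one double point, and your parenthetical that the two points ``collide at a point already on the line and hence removed'' is incorrect -- they collide at infinity. The distinction is not cosmetic: the proof of Lemma~\ref{lem:fibers} computes $\chi(F_{S_i})=2n-2$ by counting points of $\bigcup_k\bigl(p_{k(n+1)}\cap Q_{n+1}\bigr)$ in $\CC^2$, and a finite tangency point would give $2n-1$ instead.

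Second, you never verify that at most two of the $n$ lines can simultaneously miss the conic, equivalently that the triple intersections $S_i\cap S_j\cap S_k$ are empty. Without this, the stratification listed in Lemma~\ref{lem:stratification} could be incomplete: a point of a nonempty triple intersection would lie in the open stratum $S_i\cap S_j$ yet have a fiber of a different homeomorphism type. The paper handles this by observing that a line missing the affine conic is tangent to its projectivization at infinity, and at most two lines through the origin are tangent to a smooth conic. You in fact have the needed ingredient in hand -- $q$ factors over $\CC$ into two distinct linear forms whenever $Q_n\neq0$, and the directions $(x_k,y_k)$ are pairwise non-proportional on $X_n$, so $q(x_k,y_k)=0$ for at most two indices $k$ -- but you deploy it only to argue (incorrectly) that the leading term never vanishes, rather than to bound the number of special lines. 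The remaining points (closedness of the $S_i$, and local triviality over each open stratum via Thom's isotopy lemma) are fine and, if anything, treated more carefully than in the paper.
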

\begin{proof}
  To prove $\pi_{n + 1}$ is a fibration, it suffices to show that the fibers of $\pi_{n + 1}$ are homeomorphic on each open stratum $S^\circ$.
  The fiber in $\CC^2$ of a point $M_n \in X_n$ is the complement of the lines $p_{i(n + 1)}$ and the conic $Q_{n + 1}$.
  Since all the lines $p_{i(n + 1)}$ pass through the origin, their intersection is the same for any $M_n \in X_n$.
  The variation in the fibers comes from the intersection with the conic. 
  We claim that every line $p_{i(n+1)}$ either intersects the conic $Q_{n + 1}$ in two distinct points or
  not at all.
  In the intersection locus of $p_{i(n+1)} = 0$ and $Q_{n + 1} = 0$, 
  \begin{equation}\label{eqn:stratproof}
    Q_{n + 1} = Q_n + \sum_{j = 1}^n p_{j (n + 1)}^2 = Q_n + \frac{x_{n+1}^2}{x_i^2} \sum_{j = 1}^n p_{ij}^2 = 0.
  \end{equation}
  If $\sum_{j = 1}^n p_{ij}^2 \neq 0$, then there are two distinct intersection points.
  If $\sum_{j = 1}^n p_{ij}^2 = 0$, then (\ref{eqn:stratproof}) becomes $Q_n = 0$, which is a contradiction, so the intersection of $Q_{n + 1}$ and $p_{i(n + 1)}$ is empty.
  Thus, in the fiber $F_{X_n}$, every line intersects the conic in two points and in the fiber $F_{S_i}$, every line intersects the conic in two points, except $p_{i(n+1)}$ which does not intersect the conic. 
  
  It remains to be shown that there can be at most two lines $p_{i(n+1)}$ which do not intersect $Q_{n + 1}$.
  When $p_{i(n+1)}$ does not intersect $Q_{n + 1}$ in $\CC^2$, the line and conic intersect at a double point at infinity.
  In other words, $p_{i(n+1)}$ is tangent to the projectivization of $Q_{n + 1}$ at infinity.
  In projective space, $p_{i(n + 1)}$ intersects $Q_{n + 1}$ tangentially if and only if $p_{i(n + 1)}^\vee$ lies in the dual conic $Q_{n + 1}^\vee$, i.e., $\begin{pmatrix} 0 & -y_i & x_i \end{pmatrix} A_{n}^{-1} \begin{pmatrix} 0 & -y_i & x_i \end{pmatrix}^T =\sum_{j = 1}^n p_{ij}^2 = 0$.
  Since at most two lines passing through the origin can be tangent to a conic, only two of the lines $p_{i(n + 1)}$ can have an empty intersection with the conic $Q_{n + 1}$ at once. 
  Thus, in the fiber $F_{S_i \cap S_j}$, every line intersects the conic in two points, except $p_{i(n + 1)}$ and $p_{j(n + 1)}$ which do not intersect the conic. 
\end{proof}

The combinatorics of the stratification $\mathcal S$ is simple, which allows us to evaluate the formula for the Euler characteristic in Lemma~\ref{lem:combinatorics}.
To apply the lemma, we need the M\"{o}bius function of the poset of each closed stratum; see Figure~\ref{fig:posets} for the posets of each of the special strata along with the values of their M\"{o}bius functions.
The remaining computations necessary to prove Theorem~\ref{thm:main} are Euler characteristics of the fibers and of the strata.
\begin{figure}[h]
  \begin{center}
  \begin{tikzpicture}[scale=0.5]
    \node (xn) at (0,0) {$X_n$};
    \node (si) at (0, -4) {$S_i$};
    \draw (xn) -- (si);
    \node (xnlab) at (1, 0) {\color{blue}{$-1$}};
    \node (silab) at (1, -4) {\color{blue}{$1$}};    
  \end{tikzpicture}
  \hspace{3 cm}
  \begin{tikzpicture}[scale=0.5]
    \node (xn) at (0,0) {$X_n$};
    \node (si) at (-2, -2) {$S_i$};
    \node (sj) at (2, -2) {$S_j$};
    \node (sij) at (0, -4) {$S_i \cap S_j$};
    \draw (xn) -- (sj);
    \draw (xn) -- (si);
    \draw (sij) -- (sj);
    \draw (sij) -- (si);
    \node (xnlab) at (1, 0) {\color{blue}{$1$}};
    \node (silab) at (-3, -2) {\color{blue}{$-1$}};
    \node (sjlab) at (3, -2) {\color{blue}{$-1$}};
    \node (sijlab) at (1.5, -4) {\color{blue}{$1$}};
  \end{tikzpicture}
  \end{center}
  \caption{Posets for the special strata. The values of the M\"{o}bius function $\mu(-, S_i)$ and $\mu(-, S_i \cap S_j)$, respectively, are shown in blue.}\label{fig:posets}
\end{figure}
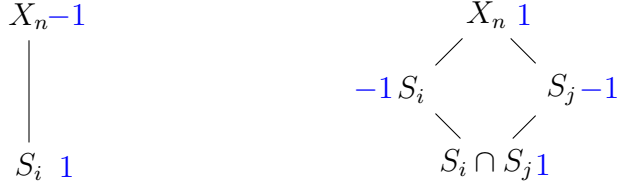
\begin{lemma}[Euler characteristics of fibers] \label{lem:fibers}
  The Euler characteristics of the fibers are
  \begin{align*}
    \chi(F_{X_n}) = 2n && \chi(F_{S_i}) = 2n - 2 && \chi(F_{S_i \cap S_j}) = 2n - 4 && \textrm{for $1 \leq i < j \leq n$.}
  \end{align*}
\end{lemma}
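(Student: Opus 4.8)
The plan is to compute $\chi(F)$ for each of the three fiber types by the excision property, writing the fiber as $(\CC^*)^2$ (or $\CC^2$ minus the coordinate lines) with a conic removed, and then subdividing the conic by its intersections with the $n$ lines $p_{i(n+1)}=0$ and with the coordinate axes $x_{n+1}=0$, $y_{n+1}=0$. The underlying ambient space in all cases is the fiber of $\pi_{n+1}$, which is the complement in $(\CC^*)^2$ of the $n-2$ lines $p_{i(n+1)}$ for $3\le i\le n$ together with the conic $Q_{n+1}$; here the two lines $p_{1(n+1)}=0$ and $p_{2(n+1)}=0$ coincide with the coordinate axes $y_{n+1}=0$ and $x_{n+1}=0$, which are already excluded in passing from $\CC^2$ to $(\CC^*)^2$. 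So I want $\chi\big((\CC^*)^2 \setminus (\ell_3\cup\cdots\cup\ell_n\cup Q_{n+1})\big)$.

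First I would record the building blocks: $\chi((\CC^*)^2)=0$; each line $\ell_i$ through the origin meets $(\CC^*)^2$ in $\PP^1$ minus its two torus-intersection points (the origin and the point at infinity), so $\chi(\ell_i\cap(\CC^*)^2)=0$, and distinct such lines are disjoint inside $(\CC^*)^2$. Hence by excision $\chi\big((\CC^*)^2\setminus(\ell_3\cup\cdots\cup\ell_n)\big)=0$. Next I remove the conic: $\chi(F)=0-\chi\big(Q_{n+1}\cap (\CC^*)^2 \setminus (\ell_3\cup\cdots\cup\ell_n)\big)$. So the whole computation reduces to finding the Euler characteristic of the conic $Q_{n+1}$ after deleting the points where it meets the coordinate axes and the points where it meets the lines $\ell_3,\dots,\ell_n$. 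The smooth conic $Q_{n+1}$ is a $\PP^1$, so $\chi=2$ minus the number of distinct points deleted. Using Lemma~\ref{lem:stratification}: in the generic case each of the $n$ lines $p_{i(n+1)}$ (coordinate axes included) meets $Q_{n+1}$ in two distinct points, and one must check these $2n$ points are all distinct — this follows because a point of $Q_{n+1}$ lying on two of the lines would lie on the origin, but the origin is not on $Q_{n+1}$ since $Q_{n+1}(0,0)=Q_n\ne 0$ on the relevant strata. Thus $2n$ points are removed, giving $\chi\big(Q_{n+1}\cap\cdots\big)=2-2n$, hence $\chi(F_{X_n})=-(2-2n)=2n-2$...

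— and here is the subtlety I expect to be the main obstacle: reconciling this count with the claimed $\chi(F_{X_n})=2n$. The resolution is that $Q_{n+1}$ need not be irreducible/smooth, or more precisely one must be careful that $F$ is the complement \emph{in $\CC^2$} as stated in Lemma~\ref{lem:stratification} (not in $(\CC^*)^2$), so the coordinate axes are part of the divisor being removed rather than already absent, and the bookkeeping of which points are being deleted from which $\PP^1$ must be redone accordingly; alternatively $Q_{n+1}$ may acquire a node, changing its Euler characteristic from $2$ to $1$. I would therefore carefully set up the exact ambient space and divisor (matching Lemma~\ref{lem:stratification}), stratify $\CC^2$ into the torus $(\CC^*)^2$, the two punctured axes, and the origin, compute $\chi$ of each piece minus the divisor, and sum; the shifts $2n,\ 2n-2,\ 2n-4$ for $F_{X_n},F_{S_i},F_{S_i\cap S_j}$ should then come out as: in $F_{S_i}$ the line $p_{i(n+1)}$ fails to meet the conic, so two fewer points are deleted from $Q_{n+1}$, lowering $\chi(F)$ by $2$ — wait, it should lower by the number of points no longer deleted, i.e. $\chi$ of the conic-complement goes \emph{up} by $2$, so $\chi(F)=-\chi(\text{conic part})$ goes \emph{down} by $2$, matching $2n\to 2n-2\to 2n-4$. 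The one genuine thing to verify with care is distinctness of all the intersection points in the generic fiber and the "exactly two tangent lines" claim already proved in Lemma~\ref{lem:stratification}, which controls that no stratum beyond $S_i\cap S_j$ occurs; everything else is excision and the Euler characteristic of $\PP^1$ minus a finite set.
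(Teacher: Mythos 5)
Your overall strategy is the same as the paper's: excise the lines (each of which, being a line through the origin, has Euler characteristic zero in $(\CC^*)^2$, equivalently in $\CC^2\setminus\{(0,0)\}$), so that everything reduces to the Euler characteristic of the punctured conic, and the three values then differ only in how many of the $n$ lines actually meet $Q_{n+1}$. But there is a genuine error in the one computation you carry out: you assign the affine conic $Q_{n+1}\subset\CC^2$ Euler characteristic $2$ (``the smooth conic is a $\PP^1$'') before deleting the $2n$ intersection points, obtaining $2-2n$ for the punctured conic and hence $\chi(F_{X_n})=2n-2$. The affine conic is not all of $\PP^1$: its projective closure meets the line at infinity in two distinct points (the leading form of $Q_{n+1}$ in $x_{n+1},y_{n+1}$ has discriminant $-4Q_n\neq 0$; the paper records that the discriminant of the full conic is $Q_n^2$), and those two points at infinity must also be removed. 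So the punctured conic has $\chi = 2-2-2n=-2n$ and $\chi(F_{X_n})=2n$ as claimed. This is exactly the point the paper makes explicit: since $Q_{n+1}$ has nonzero constant term $Q_n$, it is topologically a $\PP^1$ with two points at infinity removed, hence contributes $\chi=0$ before the further punctures, and only the intersection points survive the inclusion--exclusion.

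You do honestly flag the $2n$ versus $2n-2$ discrepancy as the main obstacle, but none of the resolutions you float is the right one: the conic does not acquire a node (its discriminant $Q_n^2$ is nonzero over every point of $X_n$, and the strata $S_i$ and $S_i\cap S_j$ lie in $X_n$), and the $\CC^2$ versus $(\CC^*)^2$ bookkeeping is not the issue, since $p_{1(n+1)}=y_{n+1}$ and $p_{2(n+1)}=-x_{n+1}$ are precisely the coordinate axes, so your description of the fiber agrees with the one in Lemma~\ref{lem:stratification}. The missing idea is simply the two points of the conic at infinity. The parts of your argument that are correct and match the paper: the relative bookkeeping (each line that fails to meet the conic lowers $\chi(F)$ by $2$, giving $2n-2$ and $2n-4$ on the smaller strata) and the distinctness of the $2n$ intersection points (two of the lines can only share the origin, which is off the conic because $Q_{n+1}(0,0)=Q_n\neq 0$).
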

\begin{proof}
  In $\CC^2\backslash \{(0,0)\}$, we have $\chi(Q_{n + 1}) = 0$, since the conic $Q_{n + 1}$ has nonzero constant term $Q_n$ and is therefore topologically equivalent to a $\PP^1$ with two points at infinity removed. 
  Similarly, in $\CC^2\backslash \{(0,0)\}$, we have $\chi(p_{i(n + 1)}) = 0$, since the line $p_{i(n + 1)}$ is topologically equivalent to a $\PP^1$ with the origin and a point at infinity removed.
  Suppose that $M_n \in X_n$.
  Because $\chi(\CC^2\backslash \{(0,0)\}) = 0$, by the excision property,
  \begin{align*}
    \chi(\pi^{-1}(M_n)) &= - \chi(Q_{n + 1}) - \sum_{i = 1}^n \chi(p_{i(n + 1)}) + \sum_{i = 1}^n \chi(p_{i(n + 1)} \cap Q_{n + 1})\\
    &= \sum_{i = 1}^n \chi(p_{i(n + 1)} \cap Q_{n + 1}) = \left | \bigcup_{i = 1}^n (p_{i(n + 1)} \cap Q_{n + 1}) \right |
  \end{align*}
  where the last equality holds because the intersection of a line with a conic is zero dimensional.
  We computed these intersections in Lemma~\ref{lem:stratification}.
  In the generic fiber, each of the $n$ lines intersects the conic in two points, for a total of $2n$ intersection points.
  The intersections for $S_i, S_i \cap S_j$ are the same, but with $n - 1$ and $n - 2$ lines, respectively. 
\end{proof}

We now show that the Euler characteristics of the closed, codimension one strata vanish.

\begin{lemma}[Euler characteristics of strata]\label{lem:strata}
  We have $\chi(S_i) = 0$ for all $i$. 
\end{lemma}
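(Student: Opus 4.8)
The plan is to produce, for every $i$, a free algebraic action of $\mathbb{C}^*$ on $S_i$; a space carrying a free $\mathbb{C}^*$-action has vanishing Euler characteristic, so this forces $\chi(S_i)=0$ (vacuously when $S_i=\emptyset$). First I would reduce to $i=n$: swapping columns $i$ and $n$ of $M_n$ and then left-multiplying by the inverse of the resulting leading $2\times 2$ block is an automorphism of $X_n$, because that leading block is always invertible on $X_n$ — its determinant is $\pm p_{1n}$, $\pm p_{2n}$, or $1$, none of which vanishes on $X_n$ — and it rescales all Pl\"ucker coordinates by a common nonzero scalar, so it carries $S_i$ onto $S_n$. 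Writing $M_n=[\,I_2\mid A\,]$ with columns $(x_j,y_j)_{j=3}^{n}$ and $(x_1,y_1)=(1,0)$, $(x_2,y_2)=(0,1)$, we have $p_{nj}=x_ny_j-y_nx_j$ for all $j$, so $\sum_{j=1}^{n}p_{nj}^2$ is a sum of squares of linear forms in $(x_n,y_n)$, hence homogeneous of degree $2$ in $(x_n,y_n)$.

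I would then let $\mathbb{C}^*$ act on $X_n$ by scaling the last column, $(x_n,y_n)\mapsto(tx_n,ty_n)$, fixing every other coordinate. By the homogeneity just noted, $\sum_j p_{nj}^2$ is multiplied by $t^2$, so the action preserves the closed condition $\sum_j p_{nj}^2=0$; to see that it preserves $S_n$ itself one must also check the open conditions that define $X_n$, and this is the one step requiring care, essentially the only content of the argument. Each Pl\"ucker coordinate is scaled by $t$ or by $1$, hence never becomes zero; and although $Q_n=\sum_{k<l}p_{kl}^2$ is not homogeneous in the last column in general, its part involving the last column is exactly $\sum_j p_{nj}^2$, which vanishes identically on $S_n$. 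Therefore $Q_n$ is constant along the orbits contained in $S_n$ and stays nonzero there. The action is free on $X_n$, since a point fixed by all $t$ would have $(x_n,y_n)=(0,0)$, contradicting $p_{2n}=-x_n\neq 0$.

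Finally, the coordinate $x_n$ is a $\mathbb{C}^*$-equivariant map $S_n\to\mathbb{C}^*$ (with $\mathbb{C}^*$ acting on itself by multiplication), so it splits the orbit projection and gives a homeomorphism $S_n\cong\mathbb{C}^*\times(S_n\cap\{x_n=1\})$. By multiplicativity of the Euler characteristic and $\chi(\mathbb{C}^*)=0$ we obtain $\chi(S_n)=0$, and hence $\chi(S_i)=0$ for every $i$ by the reduction. (One could instead quote the general fact that $\chi(X)=\chi(X^{\mathbb{C}^*})$ for an algebraic $\mathbb{C}^*$-action, which here is $\chi(\emptyset)=0$.)
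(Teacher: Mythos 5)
Your proof is correct, and it takes a genuinely different (though closely related) route from the paper. The paper reduces to $S_n$ ``by symmetry'' and then shows that the restriction of the deletion map $\pi_n$ to $S_n$ is a fibration whose fiber is a degenerate conic $\sum_{j<n}p_{jn}^2=0$, i.e.\ two lines through the origin inside the complement of the $p_{jn}$ and of $Q_n$ in $(\CC^*)^2$; since that fiber has Euler characteristic zero, so does $S_n$. Your free $\CC^*$-action scaling the last column is the same geometry seen globally rather than fiberwise --- each fiber of $\pi_n|_{S_n}$ is exactly a union of two $\CC^*$-orbits --- and the one computation both arguments hinge on is the identity $Q_n-\sum_{j=1}^{n-1}p_{jn}^2=Q_{n-1}$, which you use to see that $Q_n$ is constant and nonzero along orbits and the paper uses to see that the degenerate conic avoids $Q_n=0$. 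What your version buys is that the equivariant splitting $S_n\cong\CC^*\times(S_n\cap\{x_n=1\})$ via the invertible coordinate $x_n=-p_{2n}$ is an explicit product decomposition, so you never have to verify that the fibers of a map are pairwise homeomorphic (the step the paper leans on when invoking the fibration property), and your column-swap-and-renormalize reduction makes the ``by symmetry'' step precise, since it rescales all Pl\"ucker coordinates by a common nonzero factor and hence preserves $X_n$ while carrying $S_i$ to $S_n$. Both proofs are sound; yours is marginally more self-contained.
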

\begin{proof}
  It is sufficient to prove the claim for $S_n$ by symmetry.
  We first remark that $S_n \subset X_n^\circ$.
  We argue that $S_n$ has Euler characteristic zero by showing that $\pi_n|_{S_n}$ is a fibration whose fiber is the union of two lines through the origin in $(\CC^*)^2$. 
  Since the fiber has Euler characteristic zero, the result follows from the fibration property.
  
  Because  $\sum_{j = 1}^{n-1} p_{jn}^2$ is a homogeneous polynomial in $x_{n}$ and $y_n$, the fiber of a point in~$\pi_n(S_n)$ is a degenerate conic in the variables $x_n, y_n$, so each fiber is the union of two lines through the origin in the complement of the lines $p_{jn}$ and conic $Q_n$ in $(\CC^*)^2$.
  Since the $p_{jn}$ all pass through the origin, they do not intersect the degenerate conic in $(\CC^*)^2$.
  The conic $Q_n$ also does not intersect $\sum_{j = 1}^{n-1} p_{jn}^2$, because  $Q_n - \sum_{j = 1}^{n-1} p_{jn}^2 = Q_{n - 1} = 0$ on the intersection, contradicting $S_n \subset X_n^\circ$.
  Since the conic $\sum_{j = 1}^{n-1} p_{jn}^2$ does not intersect the lines $p_{jn}$ or the conic $Q_n$, the fiber is the union of two lines through the origin in $(\CC^*)^2$.
\end{proof}

We do not need to compute $\chi(S_i \cap S_j)$, because the inside sum in Lemma~\ref{lem:combinatorics} vanishes for $S_i \cap S_j$.
In fact, we can count the number of critical points of the parametric log-likelihood function.
Theorem~\ref{thm:main} then follows from Proposition~\ref{prop:mainpara} and (\ref{eqn:powertwo}). 

\begin{proposition}\label{prop:mainpara}
  The parametric log-likelihood function (\ref{eqn:2parametriclikelihood}) has $2^{n-2} (n-1)!$ critical points. 
\end{proposition}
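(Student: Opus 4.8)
The plan is to show $\chi(X_n) = 2^{n-2}(n-1)!$ by induction on $n$ and then invoke (\ref{eqn:powertwo}), which already identifies $\chi(X_n)$ with the number of critical points of (\ref{eqn:2parametriclikelihood}). For the base case $n=3$, the set $X_3\subseteq\CC^2$ is the complement of the two coordinate axes $x_3=0$, $y_3=0$ and the smooth affine conic $Q_3 = 1+x_3^2+y_3^2 = 0$. Since a smooth affine conic is a $\PP^1$ with two points removed (hence has Euler characteristic zero), the two axes meet only at the origin, which is not on the conic, and each axis meets the conic in two points, repeated excision gives $\chi(X_3) = 1 - 1 - 1 - 0 + 1 + 2 + 2 - 0 = 4 = 2^{1}\cdot 2!$; equivalently, this is the count of the four critical points found in Example~\ref{ex:basecase}.

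For the inductive step, suppose $\chi(X_n) = 2^{n-2}(n-1)!$. By Lemma~\ref{lem:complement}, $\chi(X_{n+1}) = \chi(X_{n+1}^\circ)$, and by Lemma~\ref{lem:stratification} the map $\pi_{n+1}\colon X_{n+1}^\circ\to X_n$ is a stratified fibration with poset of closed strata $\mathcal S = \{X_n\}\cup\{S_i\}\cup\{S_i\cap S_j\}$. I apply Lemma~\ref{lem:combinatorics} with $Y = X_n$. The term for the top stratum $X_n$ vanishes since $\mu(X_n,X_n)\bigl(\chi(F_{X_n}) - \chi(F_{X_n})\bigr) = 0$; each term for $S_i$ vanishes because $\chi(S_i) = 0$ by Lemma~\ref{lem:strata}; and each term for $S_i\cap S_j$ vanishes because its inner Möbius-weighted sum does: using $\mu(S_i\cap S_j,\,S_i\cap S_j) = 1$, $\mu(S_i\cap S_j,\,S_i) = \mu(S_i\cap S_j,\,S_j) = -1$, $\mu(S_i\cap S_j,\,X_n) = 1$ from Figure~\ref{fig:posets} together with the fiber Euler characteristics $\chi(F_{X_n}) = 2n$, $\chi(F_{S_i}) = \chi(F_{S_j}) = 2n-2$, $\chi(F_{S_i\cap S_j}) = 2n-4$ from Lemma~\ref{lem:fibers}, this sum equals $(2n-4) - (2n-2) - (2n-2) + 2n = 0$. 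Hence only the leading term of Lemma~\ref{lem:combinatorics} survives, and
\[
  \chi(X_{n+1}) = \chi(X_{n+1}^\circ) = \chi(X_n)\cdot\chi(F_{X_n}) = 2n\cdot\chi(X_n).
\]
Solving this recurrence with $\chi(X_3) = 4$ gives $\chi(X_n) = 4\prod_{k=3}^{n-1}(2k) = 2^{n-2}(n-1)!$, and by (\ref{eqn:powertwo}) this equals the number of critical points of (\ref{eqn:2parametriclikelihood}).

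The only real content is in the inductive step, and within it the load-bearing point is that all the correction terms of the Möbius inversion cancel: the $S_i$ terms because $\chi(S_i) = 0$ (Lemma~\ref{lem:strata}, whose proof uses crucially that $S_n\subset X_n^\circ$ so the degenerate conic fiber avoids the removed lines and conic), and the $S_i\cap S_j$ terms because the alternating sum $(2n-4) - 2(2n-2) + 2n$ of fiber Euler characteristics is zero — which is precisely why $\chi(S_i\cap S_j)$ is never needed. Everything else is routine bookkeeping with the excision and fibration properties of $\chi$.
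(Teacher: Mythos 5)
Your proof is correct and follows the same route as the paper's: induction on $n$, using Lemma~\ref{lem:complement} to pass to $X_{n+1}^\circ$, Lemma~\ref{lem:combinatorics} for the Möbius-weighted Euler characteristic formula, and the vanishing of all correction terms via Lemma~\ref{lem:strata} and the cancellation $(2n-4)-2(2n-2)+2n=0$ from Lemma~\ref{lem:fibers}. The only (harmless) difference is that you verify the base case $\chi(X_3)=4$ by a direct excision computation rather than citing the explicit solution count in Example~\ref{ex:basecase}.
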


\begin{proof}
  By (\ref{eqn:powertwo}), the number of critical points is equal to the Euler characteristic $\chi(X_n)$, so it suffices to show that $\chi(X_n) = 2^{n-2} (n-1)!$.
  We proceed by induction on $n$. 
  The base case is furnished by Example~\ref{ex:basecase}.
  By Lemmas~\ref{lem:complement}, \ref{lem:combinatorics}, and \ref{lem:stratification}, we have
  \begin{align*} 
    \chi(X_{n + 1}) = \chi(X_{n + 1}^\circ) =
    \chi(F_{X_n}) \chi(X_n)
    + \sum_{i = 1}^n \chi(S_i) \sum_{S' \in \{S_i, X_n\}}
    \mu(S_i, S') (\chi(F_{S'}) - \chi(F_{X_n}) )\\
    + \sum_{1 \leq i < j \leq n} \chi(S_i\cap S_j) \sum_{S' \in \{S_i \cap S_j, S_i, S_j, X_n\}}
    \mu(S_i \cap S_j, S') (\chi(F_{S'}) - \chi(F_{X_n} ))
  \end{align*}
  By Lemma~\ref{lem:strata}, $\chi(S_i) = 0$, so the first sum is zero.
  By Lemma~\ref{lem:fibers} 
  and the values of the M\"{o}bius function in Figure~\ref{fig:posets}, the second sum is zero:
  \begin{align*}
    \sum_{S' \in \{S_i \cap S_j, S_i, S_j, X_n\}}
    \mu(S_i \cap S_j, S') (\chi(F_{S'}) - \chi(F_{X_n})) =
    (1)(-4) + (-1)(-2) + (-1)(-2) = 0. 
  \end{align*}
  By induction and Lemma~\ref{lem:fibers}, $\chi(X_{n + 1}) = \chi(F_{X_n}) \chi(X_n) = 2n(2^{n-2}(n-1)!) = 2^{n-1}n!$.
\end{proof}

A formula for the ML degree of ${\rm sGr}(d, n)$ for $d > 2$ has not yet been conjectured.
In our calculation of the ML degree of ${\rm sGr}(2,n)$, we relied heavily on the nice geometry of conics in the complex projective plane, which we do not have for higher $d$.
In \cite[Theorem 4.1]{DFRS}, the ML degrees for $d = 3$ and $n = 5, 6, 7$ were found to be $12$, $552$, and $73440$ by numerical methods.
These numbers don't follow an obvious pattern, so it is likely that the combinatorics of the stratification is complicated and that we do not have the nice cancellation we get in the proof of Proposition~\ref{prop:mainpara}.
The ML degrees of the configuration space $X(d, n)$ behave similarly.
The ML degree of $X(2, n) = \mathcal{M}_{0,n}$ is $(n - 3)!$ \cite[Proposition 1]{ST}, but when $d \geq 3$ the combinatorics of the stratification is more complicated and the Euler characteristics are more difficult to compute.
The ML degrees of $X(3,n)$ are known for $n \leq 9$ \cite[Theorem 5.1]{ABFKSTL}.
The ML degree of $X(4, 8)$ was numerically shown to be $5211816$ \cite[Theorem 6.1]{ABFKSTL}.

\section{Real Solutions}\label{sec:realsolutions}

From a statistical perspective, the only relevant critical points of the implicit log-likelihood function (\ref{eqn:implicitlikelihood}) are real, nonnegative ones.
Because each $q_I$ is a square, this condition is equivalent to the critical points of the parametric log-likelihood function (\ref{eqn:parametriclikelihood}) being real.
We will give a lower bound for all $d, n$ on the number of local maxima of the parametric log-likelihood function \eqref{eqn:parametriclikelihood}. 
Furthermore, we prove that when $d = 2$, all critical points of the parametric log-likelihood function \eqref{eqn:2parametriclikelihood} are real and local maxima. 
This is not true for larger $d$.

\begin{example}[$d=3, n = 6$]\label{ex:complexcp}
  This parametric log-likelihood function generically has $17664$ critical points \cite[Section 4]{DFRS}.
  For 100 trials with data vectors whose entries were sampled uniformly at random from $[1000]$, each parametric log-likelihood function had $11904$ real critical points, all of which were local maxima.
  These computations suggest that the parametric log-likelihood function with $d = 3, n = 6$ generically has $11904$ real solutions. 
  These computations were performed using the \verb+Julia+ package \verb+HomotopyContinuation.jl+ \cite{HC}. 
\end{example}

Let $M_{d, n}$ be a $d \times n$ matrix whose first $d \times d$ block is the identity and let $p_I$ be the maximal minor whose columns are indexed by $I$. 
For real solutions we turn from the log-likelihood function to the likelihood function
\begin{equation}\label{eqn:likelihood}
  \frac{\prod_{I \in \binom{[n]}{d}}p_I^{2u_I}} {\big (\sum_{I \in \binom{[n]}{d}}p_I^{2} \big )^{\sum_I u_I}}, 
\end{equation}
which shares its critical points with the log-likelihood function.
We optimize (\ref{eqn:likelihood}) on the real open set $X_{d,n}^\RR = X_{d,n} \cap \RR^{d(n-d)}$ where
\begin{align*}
  Q_{d, n} = \sum_{I \in \binom{[n]}{d}} p_I^2 && \textrm{and} &&
  X_{d, n} = \big \{M_{d, n} \in \CC^{d(n-d)} \colon Q_{d,n} \cdot \big ( \prod_{I \in \binom{[n]}{d}} p_I \big ) \neq 0 \big \}.
\end{align*}
Since the quadric $Q_{d,n}$ has no real solutions, $X_{d, n}^\RR$ is the complement of the real algebraic variety $\bigcup_{I \in \binom{[n]}{d}}V_\RR(p_I)$.
The irreducible hypersurfaces $V_\RR(p_I)$ all pass through the origin and divide $X_{d,n}^\RR$ into unbounded regions.
It is known that every bounded region  contains at least one critical point, because the function is either positive or negative on the region and must therefore achieve a local minimum or maximum; see \cite[Proposition 10]{CHKS}.
The following result extends this idea to unbounded regions.
\begin{lemma}\label{lem:real}
  Let $f_0, f_1, \ldots, f_n \in \RR[x_1, \ldots, x_d]$ such that $f_0 = 1$ and $f_0 + f_1 + \cdots + f_n$ is positive on $\RR^d$.
  If $u_0, u_1, \ldots, u_n \in \NN_{> 0}$, then 
  \begin{align*}
    \# \big \{\textrm{regions of $\RR^d \backslash \bigcup_{i=1}^n \{f_i = 0\}$}\big\}\\
    \leq
    \#  \{\textrm{critical points of $\frac{f_0^{u_0}f_1^{u_1} \cdots f_n^{u_n}}{(f_0 + \cdots + f_n)^{u_0 + \cdots + u_n}}$ in $\RR^d$}\}\\
    \leq
    \textrm{ML degree of $V(f_0, \ldots, f_n)$}
  \end{align*}
  Further, $L = \frac{f_0^{u_0}f_1^{u_1} \cdots f_n^{u_n}}{(f_0 + \cdots + f_n)^{u_0 + \cdots + u_n}}$ has a local maximum for every region of $\RR^d \backslash \bigcup_{i=1}^n \{f_i = 0\}$ where $L > 0$ and a local minimum for every region where $L < 0$. 
\end{lemma}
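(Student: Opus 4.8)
The plan is to show that on each open region $R$ of $\RR^d\backslash\bigcup_{i=1}^n\{f_i=0\}$, the function $L$ (or $-L$, whichever is positive there) attains an interior local maximum, which is automatically a critical point; distinct regions give distinct critical points, yielding the first inequality, while the second is immediate from the definition of ML degree via Theorem~\ref{thm:mleuler} applied after homogenizing (the critical points in $\RR^d$ being among the complex ones). The sign of $L$ is constant on each region $R$ since the $f_i$ don't vanish there and the denominator $f_0+\cdots+f_n$ is everywhere positive; so the real issue is purely about attainment of an extremum of $|L|$ on a possibly unbounded region.

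The key steps I would carry out: (1) Observe that on $R$, $\log|L| = \sum_i u_i\log|f_i| - (\sum_i u_i)\log(f_0+\cdots+f_n)$; since $u_i>0$ and $|f_i|\to 0$ as one approaches the boundary $\partial R$, the term $\sum_i u_i\log|f_i|\to-\infty$ near $\partial R$, so $|L|\to 0$ there. (2) Control behavior at infinity: because $f_0+\cdots+f_n$ is a positive polynomial, its degree is even with positive leading behavior, and each $|f_i|\le f_0+\cdots+f_n$ (as the other $f_j$'s... — careful, they need not be nonnegative). The cleaner route: the function $L$ extends continuously by $0$ to all of $\RR^d$ where any $f_i=0$, and one checks $L(x)\to 0$ as $\|x\|\to\infty$ because the numerator has degree $\sum u_i\deg$ bounded by the denominator's degree $(\sum u_i)\deg(f_0+\cdots+f_n)$ with strictly smaller growth — here I'd use that $f_0=1$ forces $f_0+\cdots+f_n\ge 1$ so it genuinely dominates. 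Thus $|L|$ extends to a continuous function on the one-point compactification $S^d$ vanishing on the closed set $\{x: \prod f_i(x)=0\}\cup\{\infty\}$, hence vanishing on $\overline{R}\setminus R$. (3) Then $|L|$ restricted to the closure $\overline R\subseteq S^d$ is continuous on a compact set, hence attains a maximum; since $|L|>0$ somewhere in $R$ (pick any interior point) and $|L|=0$ on $\overline R\setminus R$, the maximum is attained at an interior point of $R$, where $\nabla L=0$. (4) Distinct regions are disjoint, so these critical points are distinct; counting gives the first inequality. For the last sentence of the lemma, on a region with $L>0$ the point found is a local max of $L$; on a region with $L<0$, apply the same argument to $-L=|L|$ to get a local max of $-L$, i.e. a local min of $L$.

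The main obstacle I anticipate is step (2): rigorously establishing that $|L|\to 0$ at infinity along $\overline R$. One must rule out the possibility that a ratio of a high-degree numerator to a high-degree denominator stays bounded away from $0$ along some unbounded sequence — for instance along a direction where $f_0+\cdots+f_n$ grows only like a lower power because of cancellation among the $f_i$. Using $f_0=1$ is what saves us: it guarantees $f_0+\cdots+f_n\ge 1$ everywhere and, combined with positivity, that this sum grows at least linearly in $\|x\|$ in suitable coordinates, while each $|f_i|$ is polynomially bounded; raising to the $u_i$ powers, the denominator's exponent $\sum u_i$ multiplies a genuinely larger degree. Making this into a clean growth estimate — perhaps by bounding $|f_i|\le C(f_0+\cdots+f_n)$ pointwise on $\RR^d$ (valid since $f_0+\cdots+f_n\ge 1$ and is a positive polynomial, so $f_i/(f_0+\cdots+f_n)$ is a bounded rational function) and then noting $|L|\le C^{\sum u_i}/(f_0+\cdots+f_n)^{u_0}\to 0$ using $u_0\ge 1$ — is the crux, and I would spend most of the write-up there.
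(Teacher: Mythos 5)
Your overall strategy is the same as the paper's. The paper's proof is a short version of your steps (1)--(4): $L$ is smooth on $\RR^d$ because the denominator is positive; $L$ tends to $0$ at infinity because the denominator has larger degree than the numerator; hence $L$ is bounded and attains an interior local maximum or minimum on each region according to its sign; the second inequality is just the definition of the ML degree. You correctly single out the only delicate point, the decay of $|L|$ at infinity along an unbounded region, and your boundary analysis and compactification of $\overline{R}$ are fine.

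However, the specific fix you propose for that crux does not work as stated. The pointwise bound $|f_i|\le C\,(f_0+\cdots+f_n)$ does not follow from $f_0+\cdots+f_n\ge 1$: a polynomial divided by a positive polynomial bounded below by $1$ need not be bounded. Take $d=1$, $f_0=1$, $f_1=x$, $f_2=-x$; then $f_0+f_1+f_2\equiv 1>0$, yet $f_1/(f_0+f_1+f_2)=x$ is unbounded and $L=(-1)^{u_2}x^{u_1+u_2}$ has no critical point on the region $(0,\infty)$. This example shows that the hypotheses of the lemma, read literally, do not force $|L|\to 0$ at infinity --- cancellation among the $f_i$ can make the denominator's effective growth smaller than its nominal degree (the paper's own one-line degree comparison is vulnerable to the same objection). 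What saves the argument in the intended application is that there each $f_i$ is a squared Pl\"{u}cker coordinate, hence nonnegative; with $f_i\ge 0$ your bound holds with $C=1$, giving $|L|\le (f_0+\cdots+f_n)^{-u_0}$, and the sum genuinely tends to infinity in that setting. So: same route as the paper and a correct diagnosis of where the work lies, but the estimate you lean on requires an additional hypothesis such as $f_i\ge 0$ (or some other condition excluding cancellation), which should be stated explicitly.
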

\begin{proof}
  Since $f_0 + \cdots + f_n > 0$, the function $L$ is smooth on $\RR^d$.
  Because the denominator of $L$ has larger degree than the numerator, $L$ approaches $0$ at infinity and is therefore bounded on all regions.
  Because $L$ is smooth and bounded, it attains a local maximum or minimum on each region, depending on the sign of $L$.
  This proves the first inequality and second statement.
  The second inequality follows from the definition of ML degree. 
\end{proof}

A similar technique was used to show that all critical points of the likelihood function on the moduli space $\mathcal{M}_{0,n}$ are real in \cite[Proposition 1]{ST}.
When Lemma~\ref{lem:real} is applied to our problem, we get a lower bound on the number of real critical points and on the number of local maxima.
This lower bound is given by the number of connected regions in the real open Grassmannian ${\rm Gr}_{\RR}(d, n)^\circ$.
Because the sign vectors of the Pl\"{u}cker coordinates are fixed on a given region, the number of possible sign vectors is a lower bound on the number of regions. 
We denote this set of sign vectors as ${\rm sgn}({\rm Gr}_\RR(d, n)^\circ) = \{({\rm sgn}(p_I))_{I \in \binom{[n]}{d}} \colon p \in {\rm Gr}_{\RR}(d, n)^\circ, p_{1\cdots d} = 1\}$.
This yields the following corollary:

\begin{corollary}\label{cor:lower_bounds}
  For any $d$ and $n$, the number of local maxima of \eqref{eqn:likelihood} is bounded below by the number of possible sign vectors of Pl\"{u}cker coordinates:
  \begin{align*}
    \# {\rm sgn}({\rm Gr}_\RR(d, n)^\circ) 
    \leq &\#\{\textrm{regions of $X^\RR_{d, n}$}\}\\
    &\leq
    \#\{\textrm{local maxima of \eqref{eqn:likelihood}}\} \leq \#\{\textrm{real critical points of \eqref{eqn:likelihood}}\}.
  \end{align*}
\end{corollary}
\begin{proof}
  The result follows from Lemma~\ref{lem:real} with $f_0 = p_{1\cdots d}^2 = 1$ and the fact that (\ref{eqn:likelihood}) is nonnegative on $\RR^{d(n-d)}$.
\end{proof}
When $d = 2$, the inequalities in Corollary~\ref{cor:lower_bounds} become equalities.
We prove this by showing that the number of sign vectors in ${\rm sgn}({\rm Gr}_\RR(d, n)^\circ)$ matches the number of critical points $2^{n-2}(n-1)!$ of the parametric log likelihood function \eqref{eqn:2parametriclikelihood}.
Theorem~\ref{thm:nonnegative} is an immediate corollary, since squares of nonzero real numbers are positive. 

  \begin{theorem}
    If $d = 2$, then every critical point of the parametric likelihood function \eqref{eqn:likelihood} is real and a local maximum. 
  \end{theorem}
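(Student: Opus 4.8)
The plan is to combine the counting result of Proposition~\ref{prop:mainpara} (equivalently, $\chi(X_n) = 2^{n-2}(n-1)!$ complex critical points of (\ref{eqn:2parametriclikelihood}), hence of (\ref{eqn:likelihood})) with the lower bound from Lemma~\ref{lem:real} applied in the sandwich
\begin{align*}
  \#\{\text{regions of } \RR^{2(n-2)} \setminus \textstyle\bigcup_{1\le i<j\le n} V_\RR(p_{ij})\}
  \le \#\{\text{real critical points}\}
  \le \#\{\text{complex critical points}\} = 2^{n-2}(n-1)!.
\end{align*}
So it suffices to show that the hyperplane-and-determinant arrangement $\bigcup_{i<j} \{p_{ij}=0\}$ cuts $\RR^{2(n-2)}$ into exactly $2^{n-2}(n-1)!$ regions; then every region must contain exactly one critical point, all $2^{n-2}(n-1)!$ complex critical points are accounted for, they are all real, and by the last sentence of Lemma~\ref{lem:real} each is a local maximum once we check that $L>0$ on every region (which holds because the numerator $\prod p_{ij}^{2u_{ij}}$ is a product of squares, hence nonnegative, and is strictly positive precisely on the complement of the arrangement, while the denominator $Q_n^{\sum u_{ij}}$ is strictly positive).

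The region count I would carry out geometrically rather than by a characteristic-polynomial computation. Writing the rows of $M_n$ as $\mathbf{x} = (x_3,\dots,x_n)$ and $\mathbf{y}=(y_3,\dots,y_n)$ with $(x_1,y_1)=(1,0)$, $(x_2,y_2)=(0,1)$, the minor $p_{ij}$ is $x_i y_j - x_j y_i$, i.e. the oriented area spanned by the $i$th and $j$th columns. The vanishing of all $p_{ij}$ with $i,j\ge 3$, together with $p_{2i}=x_i$ and $p_{1i}=y_i$, records exactly the cyclic (angular) order of the $n$ points (columns) of $M_n$, viewed in $\RR^2\setminus\{0\}$, relative to the fixed points $e_1=(1,0)$ and $e_2=(0,1)$, together with which open "quadrant-type" sectors they land in. This is precisely the combinatorial data of the chambers of the arrangement; I expect the count of chambers to match the number of ways to linearly order $n$ points around the circle with a marked direction and a $\pm$ sign on each of the $n-2$ free points, modulo the dihedral symmetry already fixed by pinning $e_1,e_2$ — which is $\tfrac{1}{2}(n-1)! \cdot 2^{n-2}$. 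Concretely I would set this up inductively: the last column $(x_n,y_n)$ ranges over $\RR^2$ minus the $n-1$ lines $p_{1n}=p_{2n}=\cdots=p_{(n-1)n}=0$ (lines through the origin in generic position once the first $n-1$ columns are generic), which divides its plane into $2(n-1)$ sectors; combining with the inductive count $2^{n-3}(n-2)!$ for $X_{n-1}^\RR$ gives $2(n-1)\cdot 2^{n-3}(n-2)! = 2^{n-2}(n-1)!$. This induction mirrors the stratified-fibration argument of Section~\ref{sec:stratification} but over $\RR$.

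The main obstacle is making the inductive region count rigorous: one must check that as $(x_n,y_n)$ traverses its $2(n-1)$ sectors, each sector meets the preimage of a fixed chamber of $X_{n-1}^\RR$ in exactly one connected region, i.e. that the real "deletion" map $X_n^\RR \to X_{n-1}^\RR$ (defined where the relevant quadratic forms stay nonzero — automatic over $\RR$ since $Q_{n-1}>0$) is a fibration over each chamber with fiber a disjoint union of $2(n-1)$ contractible pieces, and that no chambers are lost or merged in passing between strata. This is the real analogue of Lemma~\ref{lem:stratification}, and the place where one must be careful that the lines $p_{in}=0$ are genuinely distinct and that their arrangement in the fiber plane does not degenerate on the relevant locus. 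Once the region count equals $2^{n-2}(n-1)!$, the theorem follows immediately from the sandwich above, and Theorem~\ref{thm:nonnegative} is then an immediate corollary since $q_I = p_I^2 \ge 0$ and the equality forces every complex critical point to be real and (being the unique critical point in a region where $L>0$) a local maximum; that each is a local maximum of the implicit likelihood restricted to real points of $\mathrm{sGr}(2,n)$ follows because the parameterization (\ref{eqn:2parameterization}) is a local diffeomorphism near each such point.
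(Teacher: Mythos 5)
Your overall strategy coincides with the paper's: sandwich the number of real critical points between the number of regions of $X_n^{\RR}$ and the total count $2^{n-2}(n-1)!$ from Proposition~\ref{prop:mainpara} via Lemma~\ref{lem:real}, and deduce that each critical point is a local maximum from the positivity of (\ref{eqn:likelihood}) on every region. The gap is in the region count, which is the entire content of the proof and which you explicitly leave open: your induction requires that for each chamber of $X_{n-1}^{\RR}$ and each of the $2(n-1)$ sectors of the fiber arrangement, the corresponding subset of $X_n^{\RR}$ is nonempty \emph{and connected}, and this real analogue of Lemma~\ref{lem:stratification} is asserted, not proved. As submitted, the proposal therefore does not establish $\#\{\textrm{regions}\} = 2^{n-2}(n-1)!$, which is the one nontrivial step.

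The gap is, however, smaller than you make it: the sandwich only needs the lower bound $\#\{\textrm{regions}\} \geq 2^{n-2}(n-1)!$ (the upper bound is then automatic from Lemma~\ref{lem:real} and Theorem~\ref{thm:main}), and for the lower bound the connectivity claim is irrelevant. The sign vector $(\mathrm{sgn}(p_{ij}))_{i<j}$ is constant on each region of $X_n^{\RR}$; the $2(n-1)$ sectors of the fiber over any base point are distinguished by the signs of the linear forms $p_{i(n)}$, and points over different chambers downstairs already differ in the sign of some $p_{ij}$ with $j \leq n-1$. Hence distinct (chamber, sector) pairs lie in distinct regions, and your induction $\#\{\textrm{regions of } X_n^{\RR}\} \geq 2(n-1)\cdot \#\{\textrm{regions of } X_{n-1}^{\RR}\}$ becomes rigorous with no fibration argument at all. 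The paper takes the same sign-vector viewpoint but counts non-inductively: it shows each sign chamber $Y_s$ is contractible via linear interpolation of matrices, so that regions biject with realized sign vectors, and then exhibits $2^{n-2}(n-1)!$ distinct realized sign vectors by permuting and sign-flipping the last $n-2$ columns of a reference matrix, recovering the permutation and the flips from the signs of the $q_{1i}$ and the inversions. Either route closes the argument; yours stops just short of the separation observation that makes it work.
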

  \begin{proof}
      We will prove that $\# {\rm sgn}({\rm Gr}_\RR(2, n)^\circ) = 2^{n-2}(n-1)!$.
  By Proposition~\ref{prop:mainpara}, the total number of critical points is $2^{n-2}(n-1)!$, so the result follows from Corollary~\ref{cor:lower_bounds}.

  To count the sign vectors, we fix $2 \leq k \leq n$  and begin with a matrix
  \begin{align*}
    M_n = \begin{pmatrix}
    1 & 0 & -x_3 & \cdots & -x_k & x_{k + 1} & \cdots & x_n\\
    1 & 0 & y_3 & \cdots & y_k & y_{k + 1} & \cdots & y_n
    \end{pmatrix}
  \end{align*}
  in $X^\RR_{2,n}$ such that $x_3,\ldots, x_n, y_3, \ldots, y_n > 0$.
  We take any permutation of the last $n - 2$ columns and flip the sign of any of the last $n - 2$ columns.
  This process yields $2^{n - 2} (n - 2)!$ different matrices.
  We argue that given a fixed $M_n$, each of these matrices has a distinct sign vector.
  Because every matrix in $X_{2,n}$ can arise in this way, there are $2^{n - 2} (n - 2)!$ possible sign vectors for a fixed $k$.
  Hence, in total, we have $2^{n - 2} (n - 1)!$ total possible sign vectors. 

  Let $p$ be the Pl\"{u}cker vector of $M_n$ and $q$ be the Pl\"{u}cker vector of a matrix produced from $M_n$ by the permutation and sign changes described above.
  We identify the permutation and columns whose signs were flipped.
  The columns $i$ with $q_{1i} < 0$ had their signs flipped. 
  Assuming that $q_{1i} > 0$ for all $i$, we uniquely identify the permutation from its inversions: for $i, j \geq 3$ the signs of $p_{ij}$ and $q_{ij}$ agree if and only if the pair $ij$ is not an inversion.
\end{proof}

The practical implication of this result is that likelihood inference for projection DPPs is difficult.
In particular, for any data $u$, the number of local maximizers of both the implicit and parametric log-likelihood function grows exponentially.
In constrast, the linear model on $\mathcal{M}_{0,n}$ in \cite{ST},  has only one positive critical point.

Example~\ref{ex:complexcp} shows that for larger $d$ the critical points are not necessarily all real. 
However, in the $d = 3, n = 6$ case, the quantities in Corollary~\ref{cor:lower_bounds} are all equal to 11904. 
We can prove this with the help of a computer: sample 1 million $3 \times 3$ matrices $A$ and compute the Pl\"{u}cker coordinates of the $3 \times 6$ matrix $[{\rm Id}_3 \,\, A]$.
The number of sign vectors we get from this process is a lower bound of 11904 on the number of sign vectors that can arise in this way.
By Example~\ref{ex:complexcp} all quantities in  Corollary~\ref{cor:lower_bounds} are therefore equal to $11904$.
\begin{conjecture}
  For $d \geq 3$, the last two inequalities in Corollary~\ref{cor:lower_bounds} are equalities, i.e.,
  \begin{align*}
    \#\{\textrm{regions of $X^\RR_{d, n}$}\}
    =
    \#\{\textrm{local maxima of \eqref{eqn:likelihood}}\} = \#\{\textrm{real critical points of \eqref{eqn:likelihood}}\}.
  \end{align*}
\end{conjecture}



\bigskip

\noindent {\bf Computational Experiments} ($d = 2$){\bf.}
  To compute the true MLE for some data $u$, one needs to compute all critical points of the parametric log-likelihood function (\ref{eqn:2parametriclikelihood}), evaluate (\ref{eqn:2parametriclikelihood}) at the critical points, and select the one which yields the largest value.
  We give runtimes for computing the MLE of our model for data selected uniformly at random from $[1000]$.
  We use the numerical algebraic geometry software \verb+HomotopyContinuation.jl+ \cite{HC} to compute the critical points of (\ref{eqn:2parametriclikelihood}).
  We use the strategy outlined in \cite[Section 3]{ST} to find solutions to the rational equations $\nabla L_u(M_n) = 0$. 
  We first use the monodromy method to compute the solutions to  $\nabla L_{u'}(M_n) = 0$ for some complex start parameters $u'$.
  We then use a coefficient parameter homotopy to move the start parameters $u'$ to the target parameters $ u \in [1000]^{\binom{n}{2}}$, simultaneously moving each solution of $\nabla L_{u'}(M_n) = 0$ to a solution of $\nabla L_u(M_n) = 0$.
  \[\begin{array}{|c|c|c|c|c|c|c|}
  \hline
                   & n = 4           & n = 5           &  n = 6 & n = 7 & n = 8& n = 9\\
  \hline
  \textrm{Runtime} & 180 \textrm{ ms} & 235 \textrm{ ms}& 962 \textrm{ ms} & 14.287 \textrm{ s} & 330 \textrm{ s} & 2 \textrm{ h}\\
  \hline
  \textrm{Number of Solutions} & 24 & 192 & 1920 & 23040 & 322560 & 5160960\\
  \hline
  \end{array}\]
  The runtime grows exponentially, as expected.
  The bulk of the time is spent computing the solutions to $\nabla L_{u'}(M_n) = 0$ with the monodromy method.
  Because \verb+monodromy_solve+ uses a heuristic stopping criterion, knowing the number of solutions a priori makes the computation significantly faster than if we did not know the number of solutions. 
  The computations were run with 64 threads on a $2 \times 8$-Core Intel Xeon Gold 6144 at 3.5 GHz.

\medskip
{\centering
  \noindent {\bf Acknowledgments}\\}

We thank Simon Telen for proving the count of critical points for $d = 2, n = 4$, which inspired the proof of Theorem~\ref{thm:main}.
We also thank Claudia Fevola, Bernhard Reinke, and Claudia Yun for helpful conversations. 
We thank the Max Planck Institute for Mathematics in the Sciences for its  stimulating research environment and for its computing resources. 

\printbibliography

\noindent Hannah Friedman, UC Berkeley
\hfill \url{hannahfriedman@berkeley.edu}

\end{document}